\newcommand{\beq}{\begin{equation}}
\newcommand{\eeq}{\end{equation}}
\newcommand{\bea}{\begin{eqnarray}}
\newcommand{\eea}{\end{eqnarray}}
\newcommand{\beas}{\begin{eqnarray*}}
\newcommand{\eeas}{\end{eqnarray*}}
\newtheorem{theorem}{Theorem}[section]
\newtheorem{definition}[theorem]{Definition}
\newtheorem{proposition}[theorem]{Proposition}
\newtheorem{corollary}[theorem]{Corollary}
\newtheorem{remark}[theorem]{Remark}
\newtheorem{example}[theorem]{Example}
\newtheorem{examples}[theorem]{Examples}
\newtheorem{foo}[theorem]{Remarks}
\newenvironment{Remark}{\begin{remark}\rm}{\end{remark}}
\newtheorem{Prop}{Proposition}
\newtheorem{Lem}{Lemma}
\newenvironment{proof}{\addvspace{\medskipamount}\par\noindent{\it
Proof}.}
{\unskip\nobreak\hfill$\Box$\par\addvspace{\medskipamount}}
\newcommand{\bM}{\mathcal M}
\newcommand{\Dv}{\Delta_\mathcal{V}}
\newcommand{\Ho}{\mathcal H}
\newcommand{\V}{\mathcal V}
\newcommand{\M}{\mathcal M}
\newcommand{\R}{\mathbb R}
\newcommand{\Op}{\ensuremath{\mathcal{L}}}
\newcommand{\ep}{\varepsilon}
\title{Hypocoercive estimates on foliations and velocity spherical Brownian motion}
\author{Fabrice Baudoin%
  \thanks{Author supported in part by Grant NSF-DMS 15-11-328}}
\affil{Department of Mathematics, Purdue University, USA}
\author{Camille Tardif}
\affil{LPMA, Universit\'e Pierre \& Marie Curie, Paris, France}
\date{}
\begin{document}

\maketitle

\begin{abstract}

By further developing the generalized $\Gamma$-calculus for hypoelliptic operators, we prove hypocoercive estimates for a large class of Kolmogorov type operators which are defined on non necessarily totally geodesic Riemannian foliations. We study then in detail the example of  the velocity spherical Brownian motion, whose generator is a step-3 generating hypoelliptic   H\"ormander's type operator. To prove hypocoercivity  in that case, the key point is to show  the existence of a convenient  Riemannian foliation associated to the diffusion. We will then deduce, under suitable geometric conditions, the convergence to equilibrium of the diffusion in $H^1$ and in $L^2$.

\end{abstract}

\tableofcontents

\section{Introduction}

Let $\mathcal{L}$ be a hypoelliptic Kolmogorov type diffusion operator on a smooth and connected manifold $\mathcal{M}$, which admits an invariant probability measure $\mu$. We are interested in the problem of exponential convergence to equilibrium for the semigroup $e^{t\mathcal{L}}$. To address this problem, several tools have been  developed in the last few years. A functional analytic approach, based on previous ideas by Kohn and H\"ormander, relies on  spectral localization tools to prove exponential convergence to equilibrium with explicit bounds on the rate. For this approach, we refer to Eckmann and Hairer \cite{EH}, H\'erau and Nier \cite{HN1}, and  Heffer and Nier \cite{HN2}. On the other side,  L. Wu in \cite{Wu}, Mattingly, Stuart and Higham in \cite{Matt}, Bakry, Cattiaux and Guillin in \cite{BCG}, Talay \cite{Talay}  and F.Y. Wang \cite{FYWang3} use  Lyapunov functions and probabilistic tools to prove  exponential convergence to equilibrium in several cases.

\

A fundamental contribution, closer to the approach of the present paper,  is due to Villani \cite{Villani1}, who introduced in his memoir  the important notion of hypocoercivity (see also  Dolbeault, Mouhot and Schmeiser \cite{Dolbeault}). Villani's theory was since revisited  by using  a generalized Bakry-\'Emery type $\Gamma$-calculus in Baudoin \cite{baudoin-bakry}, Monmarch\'e \cite{mymarket}  and F.Y. Wang \cite{FYWang2}. To study hypocoercivity, complementing methods from Riemannian geometry were also used in Baudoin \cite{BaudoinEMS, baudoinAMS}. 

\

\textbf{Generalized Bakry-\'Emery estimates}

\

The basic idea in Baudoin \cite{baudoin-bakry,BaudoinEMS} is to find a Riemannian metric on $\mathcal{M}$ for which $\mathcal{L}$ satisfies a generalized Bakry-\'Emery estimate. More precisely, let $g$ be a Riemannian metric on $\mathcal{M}$.
Consider then the second order differential bilinear form
\[
\mathcal{T}_2 (f)=\frac{1}{2} ( \mathcal{L} \| \nabla f \|^2 - 2 \langle \nabla f , \nabla \mathcal{L} f \rangle), \quad f \in C_0^\infty(\mathcal{M}),
\]
where $\nabla$ denotes the Riemannian gradient for the metric $g$. Let us denote by $\Gamma$ the \textit{carr\'e du champ operator} of $\mathcal{L}$. It is  proved in in \cite{baudoin-bakry} that if there exist positive constants $K_1,K_2$ such that for every $ f \in C_0^\infty(\mathcal{M})$,
\begin{align}\label{BE}
\mathcal{T}_2 (f)\ge  K_1 \| \nabla f \|^2 -K_2 \Gamma(f),
\end{align}
and that if $\mu$ satisfies a Poincar\'e inequality
\[
\int f^2 d\mu -\left(\int f d\mu\right)^2 \le \frac{1}{\lambda} \int \| \nabla f \|^2 d\mu,
\]
then $e^{t\mathcal{L}}$ converges exponentially fast to equilibrium in $H^1(\mu)$. The rate of convergence may moreover be estimated explcitly in terms of $K_1,K_2$ and $\lambda$ (see \cite{baudoin-bakry}). 

\

Finding  general intrinsic conditions on $\mathcal{L}$ so that there exists a metric $g$ satisfying \eqref{BE} seems to be a very difficult open problem. Theorem 18 in Villani \cite{Villani1} (see also Monmarch\'e \cite{mymarket}) may actually be interpreted as providing such sufficient conditions and gives a way to explicitly construct the metric $g$ in some cases. Another, more geometric, approach is taken in \cite{BaudoinEMS} to construct $g$. In  \cite{BaudoinEMS} , one considers totally geodesic Riemannian foliations whose leaves are determined by the principal symbol of $\mathcal{L}$. The inequality \eqref{BE} is then equivalent to bounds on simple Ricci-like  tensors associated to the foliation.

\

\textbf{Non-totally geodesic foliations}

\

It turns out that for several interesting diffusion operators $\mathcal{L}$, including the generator of the velocity spherical Brownian motion which is thoroughly studied in the present paper,  it does not seem possible to find a totally geodesic foliation satisfying \eqref{BE}.  However, one will exhibit a non totally geodesic foliation for which this works. For this reason, it is interesting to generalize the results of \cite{BaudoinEMS} to the case where the foliation is not totally geodesic. This is what we do in Section 2 of the present paper. In our main result, we give a tensorial expression of $\mathcal{T}_2$, from which one can easily deduce sufficient conditions ensuring that \eqref{BE} is satisfied. In the non-totally geodesic case, the Bochner's type identities are much more involved and an important tool to prove the tensorial expression of $\mathcal{T}_2$ is to work with a connection introduced by Hladky in \cite{Hladky}.

\

\textbf{Velocity spherical Brownian motion}

\

In Section 3 we study the convergence to equilibrium, and prove hypocoercivity, for a class of diffusions called \emph{Velocity Spherical Brownian Motions}. Results of Section 2 can not be applied directly, because the main problem here is precisely to find a metric for which a generalized Bakry-\'Emery estimate is satisfied.  The velocity spherical Brownian motion is a diffusion process valued in $T^1\mathcal{M}$, the unit tangent bundle of a Riemannian $n$-dimensional manifold $\mathcal{M}$ of finite volume, and was originally introduced in Angst-Bailleul-Tardif  \cite{ABT} under the name kinetic Brownian motion. It is a velocity/position process ($T^1\mathcal{M}$ is seen as a phase space) where the velocities live on the tangent sphere and have Brownian dynamics. The generator takes the form 
\[
\Op:= \frac{\sigma^2}{2} \Delta^{v} + \kappa \xi,
\]
where $\sigma$ and $\kappa$ are parameters, $\Delta^{v}$ is the vertical  Laplacian on the spherical fibers and $\xi$ is the vector field of the unit tangent bundle generating the geodesic flow. The diffusion $\Op$ is a hypoelliptic Kolmogorov type diffusion operator on the manifold $T^1\mathcal{M}$. For example, when $\mathcal{M}=\R^n$ and $\sigma=\kappa=1$, it may be identified with $\left(\theta_t, \int_{0}^t \theta_s ds\right)_{t\geq 0}$ where $(\theta_t)_{t\geq 0}$ is a Brownian motion on the $(n-1)$-dimensional sphere seen as a submanifold of $\R^n$. The velocity spherical Brownian motion may be seen as the Riemannian counterpart of the relativistic diffusion introduced by Franchi and  Le Jan in \cite{FLJ07}.  An interesting property of the motion  is that it interpolates between the geodesics (when $\sigma =0$, $\kappa=1$) and the Brownian motion on $\mathcal{M}$ (when $\kappa=\sigma$ goes to infinity), see \cite{ABT} but also \cite{XMLi} for this homogenization result. It is also close to Bismut's Hypoelliptic Laplacian \cite{Bismut}, which lives on $T\mathcal{M}$ and where the velocities have the dynamics of an Ornstein-Uhlenbeck process on the fiber instead of spherical Brownian motion one. The velocity spherical Brownian motion is also a natural Riemannian generalization,  with zero-potential, of the spherical Langevin process which is studied for example in \cite{GS2013}, \cite{GKMSW2014} and \cite{GS2014} (see also references therein). The latter process arises in industrial applications as the so-called fiber lay-down process in modeling virtual nonwoven webs. 

\

The hypocoercivity of the spherical Langevin process in $\mathbb{R}^n$ has been proved in \cite{GS2014} extending abstract Hilbert space strategy developed by Dolbeault, Mouhot and Schmeiser in \cite{Dolbeault}. Nevertheless, quoting \cite{GS2014} \textit{ "it is an open problem to apply the methods from Villani's memoir \cite{Villani1} to the spherical velocity Langevin equation"}. Indeed, as remarked in \cite{GS2014} , Villani's approach seems to be, a priori, not adapted due to the geometry of Lie brackets relations between vector fields occurring in the decomposition of $\Op$. More precisely, we look at a generator $\Op$ which is locally of the form $\sum_{i=1}^{n-1}A_i^{*}A_{i} + B$ where to get all the directions in the tangent  space, (to get H\"ormander condition of hypoellipticity) we need to consider $A_i$, $[A_i,B]$ for  $i=1,\dots n-1$ and the last direction $B$ is obtain via the $3$-order bracket $[A_i, [A_i, B] ]$ (for some $i$). It is not possible to get all the directions considering only brackets of the form $[ \cdots[C_2,[C_1, B] B ] \cdots B]$, as it is needed with Villani's method (see his memoir for the notations). 

\

Nevertheless, we will be able to extend Villani's methodology  to construct a metric $g$ on $T^1\mathcal{M}$ for which a generalized Bakry-\'Emery estimate is satisfied. It is worth noting that the metric $g$, though equivalent to the Sasaki metric on $T^1\mathcal{M}$, is associated to a non-totally geodesic Riemannian foliation on $T^1\mathcal{M}$,  as studied in Section 2.  We deduce then the following main  theorem of the paper.

\begin{theorem}
Let
\[
\Op= \frac{\sigma^2}{2} \Delta^{v} + \kappa \xi,
\]
be the generator of the velocity spherical Brownian motion on $T^1\mathcal{M}$. Let us assume that the Riemannian curvature tensor of $\mathcal{M}$ is bounded and that $\mathcal{M}$ is complete. Let us moreover assume that the normalized  Sasaki-Riemannian measure $\mu$ on  $T^1\mathcal{M}$ satisfies a Poincar\'e inequality
\[
\int f^2 d\mu -\left(\int f d\mu\right)^2 \le \frac{1}{\lambda} \int \| \nabla f \|^2 d\mu.
\]
Then, there exist $C_1,C_2>0$, such that for every $t \ge 0$, and $f \in H^1(\mu)$,
\[
\left\| e^{t\Op} f - \int f d\mu  \right\|_{H^1(\mu)} \le C_1 e^{-C_2 t} \left\| f- \int f d\mu \right\|_{H^1(\mu)},
\]
where 
\[
\| f \|^2_{H^1(\mu)}=\int f^2 d\mu + \int \| \nabla f \|^2 d\mu,
\]
and $\nabla$ is the Riemannian gradient for the standard Sasaki metric on $T^1\mathcal{M}$.
Moreover, the convergence also holds in $L^2(\mu)$, i.e.  there exist $C_3 >0$, such that for every $t \ge 0$, and $f \in L^2(\mu)$,
\[
\left\| e^{t\Op} f - \int f d\mu  \right\|_{L^2(\mu)} \le C_3 e^{-C_2 t} \left\| f - \int f d\mu \right\|_{L^2(\mu)}.
\]
\end{theorem}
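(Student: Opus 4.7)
The strategy is to reduce the theorem to the abstract framework of Section 2 by constructing a Riemannian metric $g$ on $T^1\mathcal{M}$, equivalent to the Sasaki metric, for which the vertical foliation by spherical fibers is Riemannian (though not totally geodesic) and the generalized Bakry-\'Emery estimate \eqref{BE} holds. The natural splitting $T(T^1\mathcal{M})=\mathcal{H}\oplus\mathcal{V}$ coming from the Levi-Civita connection of $(\mathcal{M},g_\mathcal{M})$ places $\xi$ in $\mathcal{H}$ and $\Delta^v$ in $\mathcal{V}$ and supplies the candidate foliation; the carr\'e du champ of $\mathcal{L}$ is $\Gamma(f)=\tfrac{\sigma^2}{2}\|\nabla^\mathcal{V} f\|^2$, so $\Gamma$ controls only vertical directions and the inequality \eqref{BE} has to manufacture the missing horizontal control.

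I would first introduce a two-parameter family $g_{a,b}$ of metrics on $T^1\mathcal{M}$ that preserve the orthogonality of $\mathcal{H}$ and $\mathcal{V}$ while rescaling each factor independently, with $a,b>0$ to be tuned. All such metrics are equivalent to the Sasaki one, so the Poincar\'e inequality for $\mu$ and the $H^1(\mu)$ norm in the statement transfer up to multiplicative constants; it therefore suffices to establish exponential decay in the $H^1$ norm associated to $g_{a,b}$.

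Next I would expand $\mathcal{T}_2(f)$ with respect to $g_{a,b}$ using the tensorial identity of Section 2, which relies on the Hladky connection adapted to non-totally-geodesic foliations. The expansion produces three non-negative quadratic forms in the second derivatives of $f$ (vertical Hessian, horizontal Hessian, and a mixed one) plus tensorial contractions involving the Riemann tensor of $\mathcal{M}$ and the second fundamental form of the spherical fibers inside $(T^1\mathcal{M},g_{a,b})$. The bounded-curvature hypothesis handles the curvature contractions. The crucial mechanism is that the step-three bracket $[A_i,[A_i,\xi]]\sim\xi$ responsible for H\"ormander hypoellipticity emerges, after commuting and integrating by parts, as a horizontal gradient term $\|\nabla^\mathcal{H} f\|^2$ with a coefficient proportional to $\kappa^2\sigma^2$ inside $\mathcal{T}_2(f)$. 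For $a,b$ small and suitably chosen (relative to the curvature bound, $\sigma$, and $\kappa$), Young's inequality then absorbs the remaining cross-terms and produces $K_1,K_2>0$ satisfying $\mathcal{T}_2(f)\ge K_1\|\nabla f\|^2_{g_{a,b}}-K_2\Gamma(f)$.

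With \eqref{BE} and the Poincar\'e inequality in hand, the $H^1(\mu)$ exponential decay follows from the method of \cite{baudoin-bakry}. To upgrade to $L^2(\mu)$, I would invoke a short-time regularization estimate: a Kohn-type iteration applied to the specific operator $\mathcal{L}=\tfrac{\sigma^2}{2}\Delta^v+\kappa\xi$ yields $\|e^{\delta\mathcal{L}}f\|_{H^1(\mu)}\le C\delta^{-\alpha}\|f\|_{L^2(\mu)}$ for small $\delta>0$, so that the $H^1$ decay propagates back to an $L^2$ decay with the same exponential rate. The principal obstacle is the computation and absorption step inside $\mathcal{T}_2$: because $\Gamma$ does not see the horizontal direction $\xi$ and the H\"ormander bracket structure is of step three, the required horizontal control is hidden at second order, and it is precisely the flexibility of the non-totally-geodesic Bochner identity, and the Hladky connection in particular, that lets the Villani-type scheme go through where the direct functional ansatz fails.
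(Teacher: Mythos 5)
Your high-level scheme (generalized Bakry--\'Emery calculus, find a good quadratic tensor, H\'erau-type regularization for $L^2$) matches the paper, but the central construction in your proposal does not work as stated. You propose a metric $g_{a,b}$ that ``preserves the orthogonality of $\mathcal{H}$ and $\mathcal{V}$ while rescaling each factor independently,'' i.e.\ a block-diagonal rescaling of the Sasaki metric. This is not enough: the paper shows that the quadratic tensor for which the generalized Bakry--\'Emery estimate holds must be
\[
\mathcal{T}(f)= a\,\Vert\nabla^v f\Vert^2 - 2b\,\langle\nabla^v f,\nabla^{\tilde h}f\rangle + c\,\Vert\nabla^{\tilde h}f\Vert^2 + d\,\Vert\nabla^\xi f\Vert^2,
\]
where the \emph{off-diagonal} coupling $-2b\,\langle\nabla^v f,\nabla^{\tilde h}f\rangle$ (a Villani-type twist, corresponding to a non-orthogonal inner product between $\mathcal{V}$ and the part of $\mathcal{H}$ orthogonal to $\xi$) is essential. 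Without $b>0$ the constant $C^{\tilde h}$ in Proposition~\ref{prop.ineq} cannot be made positive simultaneously with the constraints $C,D\ge 0$ and $C^\xi>0$ except for very small $\sigma$; it is precisely the contribution $+b\kappa$ coming from $-2b\,\Gamma_2^{v,\tilde h}$ that supplies the horizontal coercivity. You need the cross $\Gamma_2^{v,\tilde h}$ block, which your diagonal ansatz excludes.

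A secondary inaccuracy: you attribute the appearance of $\Vert\nabla^\xi f\Vert^2$ to the step-3 bracket ``after commuting and integrating by parts,'' with coefficient $\kappa^2\sigma^2$. In fact the computation is purely pointwise (no integration by parts), and in Lemma~\ref{gamma2calc} the $\Vert\nabla^\xi f\Vert^2$ and $\Vert\nabla^{\tilde h}f\Vert^2$ terms appear inside $\Gamma_2^\xi$ and $\Gamma_2^{\tilde h}$ respectively with coefficient $\tfrac{\sigma^2}{2}(n-1)$ and $\tfrac{\sigma^2}{2}$, coming from $[V_i^2,H_0]=2V_iH_i+H_0$ and $[V_i^2,H_j]=\delta_{ij}(H_i-2V_iH_0)$; no $\kappa^2$ factor is involved. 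Finally, for the $L^2$ upgrade the paper does not use a Kohn iteration but rather H\'erau's method with time-dependent polynomial weights, and crucially must introduce the higher-order functionals $\Sigma^v(f)=(\Delta^v f)^2$ and $\Sigma^{v,\xi}(f)=\Delta^v f\cdot\xi f$ involving third derivatives (Lemma~\ref{sigma} and Lemma~\ref{regularization}); a plain Kohn-type iteration is not obviously adequate here because of the step-3 bracket structure. So the proposal identifies the right ingredients but misses the non-diagonal twisted tensor, which is the actual mechanism that makes Villani's scheme go through for this operator.
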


To get the convergence in $L^2$ norm we prove an interesting regularization result (see Lemma \ref{regularization} below) by adapting H\'erau's method (see \cite{Herau}) to our  case. Once again, computations are more tedious due to the particular geometry of the Lie brackets and our regularization result is certainly not optimal.

\section{Kolmogorov type operators on Riemannian foliations}

\subsection{Generalized $\Gamma$-calculus for Kolmogorov type operators}

Let $\M$ be a smooth, connected  manifold with dimension $n+m$. We assume that $\bM$ is equipped with a Riemannian foliation with  $m$-dimensional leaves. We denote by $\Dv$ the vertical Laplacian of the foliation, $\nabla_\V$ the vertical gradient and $\nabla_\Ho$ the horizontal gradient.

\begin{definition}
We call Kolmogorov type operator a hypoelliptic diffusion operator $\mathcal L$ on $\M$ that can be written as
\[
\mathcal L=\Dv+Y,
\]
where $Y$ is a smooth vector field on $\M$.
\end{definition}

The geometry of a Riemannian foliation can locally be described in local orthonormal frames.  A local orthonormal frame of smooth vector fields $\{ X_1, \cdots ,X_n, Z_1, \cdots , Z_m \}$ is said to be adapted if the $X_i$'s are horizontal and the $Z_l$'s vertical, that is tangent to the leaves. Since the leaves are integral sub-manifolds,  one can write the structure constants as follows:
\begin{align}\label{frame}
\begin{cases}
[Z_\gamma,Z_\beta]=\sum_{\alpha} \omega_{\gamma \beta}^\alpha Z_\alpha \\
[Z_\beta,X_i]=\sum_{k}\omega_{\beta i}^k X_k +\sum_{\alpha} \omega_{\beta i}^\alpha Z_\alpha.
\end{cases}
\end{align}
We will always stick to the convention that the  letter $Z$ is reserved for vertical fields and the letter $X$ for horizontal fields. The greek indices will be for summations on vertical directions and the latin indices for summations on horizontal directions. We observe that the Riemannian foliation is bundle-like (see \cite{Tondeur} page 56) if and only if
\[
\omega_{\beta i}^k=-\omega_{\beta k}^i ,
\]
and moreover totally geodesic (see \cite{Tondeur} page 58) if and only if moreover
\[
\omega_{\beta i}^\alpha=-\omega_{\alpha i}^\beta.
\]
In such a frame the vertical Laplacian is given by
\[
\Dv=\sum_\alpha Z_\alpha^2 -\sum_{\alpha ,\beta} \omega_{\alpha \beta}^\beta Z_\alpha.
\]

\

In the sequel of the section, we consider a Kolmogorov type operator 
\[
\mathcal L=\Dv+Y.
\]

In this general framework, to study $\mathcal L$, it will be more convenient not to work with the  Levi-Civita connection of the Riemannian metric, but with a metric  connection for which the horizontal and vertical bundles are parallel. Such a connection was introduced by Hladky in \cite{Hladky}.

The Hladky connection of the foliation is a metric connection $\nabla$ with torsion tensor $T$ such that:

\begin{itemize}
\item If $U\in \Gamma^\infty(\mathcal{H})$, then $\nabla U \in \Gamma^\infty(\mathcal{H})$;
\item If $V \in \Gamma^\infty(\mathcal{V})$, then $\nabla V \in \Gamma^\infty(\mathcal{V})$;
\item If  $U,V \in \Gamma^\infty(\mathcal{V})$, then $T(U,V)=0$;
\item If $U,V \in \Gamma^\infty(\mathcal{H})$, then $T(U,V)=-[U,V]_\V$, where $[U,V]_\V$ is the vertical part of $[U,V]$;
\end{itemize}

One can check that

\[
\nabla_U V =
\begin{cases}
 ( D_U V)_{\mathcal{H}} , \quad U,V \in \Gamma^\infty(\mathcal{H}) \\
 ( D_U V)_{\mathcal{V}}, \quad U,V \in \Gamma^\infty(\mathcal{V})
\end{cases}
\]
where $D$ is the Levi-Civita  connection of the Riemannian metric and the subscript $\Ho$  (resp. $\mathcal{V}$) denotes the projection on $\mathcal{H}$ (resp. $\mathcal{V}$). Actually, in the local frame \eqref{frame}, one has the following formulas (see Example 2.16 in \cite{Hladky}):
\begin{align}\label{formulas}
\begin{cases}
\nabla_{Z_\beta} X_j =\frac{1}{2} \sum_k (\omega_{\beta j}^k -\omega_{\beta k}^j)X_k \\
\nabla_{X_i} Z_\beta=-\frac{1}{2} \sum_\alpha ( \omega_{ \beta i}^\alpha-\omega_{ \alpha i}^\beta)Z_\alpha \\
\nabla_{Z_\gamma} Z_\beta= \frac{1}{2} \sum_\alpha (\omega_{\gamma \beta}^\alpha +\omega_{\alpha \gamma}^\beta+\omega_{\alpha \beta}^\gamma) Z_\alpha \\
T( Z_\beta, X_j)=-\frac{1}{2} \sum_k ( \omega_{\beta k }^j + \omega_{\beta j }^k) X_k -\frac{1}{2} \sum_\alpha  ( \omega_{\alpha j }^\beta + \omega_{\beta j }^\alpha) Z_\alpha .
\end{cases}
\end{align}

Associated to $\mathcal L$, we consider the Bakry's $\Gamma_2$ operator which is defined, for $f,g \in C_0^\infty(\M)$ by
\[
\Gamma_2(f,g)=\frac{1}{2} ( \mathcal L\Gamma(f,g)-\Gamma(g, \mathcal Lf)-\Gamma(f,\mathcal Lg)),
\]
where
\[
\Gamma(f,g) =\langle \nabla_\V f,\nabla_\V g \rangle.
\]
For $f \in C_0^\infty(\M)$, we will simply denote $\Gamma (f,f)$ and $\Gamma_2(f,f)$ respectively by $\Gamma(f)$ and $\Gamma_2(f)$.

\

Our first result is a Bochner's type identity for $\mathcal L$. In order to state it, we introduce some tensors associated to the  connection $\nabla$. 

If $f$ is a smooth function, the vertical Hessian of $f$ will be denoted by  $\nabla^2_\V f$ and is defined on vertical vectors by
\[
\nabla^2_\V f (U,V)=\langle \nabla_U \nabla_\V f ,V\rangle, \quad U,V \in \Gamma^\infty(\mathcal{V}).
\]
If $U$ or $V$ is horizontal, we define $\nabla^2_\V f (U,V)=0$.  It is easy to check that $\nabla^2_\V f $ is symmetric because for $ U,V \in \Gamma^\infty(\mathcal{V})$, $T(U,V)=0$.

The  Ricci curvature of the connection $\nabla$ will denoted by $\mathbf{Ric}$ and, as usual, $\mathbf{Ric}(U,V)$ is defined as the trace of the endomorphism $W \to R(W,X)Y$ where $R$ is the Riemann curvature tensor of $\nabla$.

Finally, the tensor $\nabla Y$ is defined for $U,V \in \Gamma^\infty(T \M)$ by 
\[
\nabla Y (U,V)=\langle \nabla_U Y,V \rangle.
\]

\begin{proposition}\label{Ga_2}
For $f \in C^\infty_0(\M)$,
\[
\Gamma_2(f)=\| \nabla^2_\V f \|^2 + \mathbf{Ric} (\nabla_\V f , \nabla_\V f) -\nabla Y(\nabla_\V f , \nabla f)-\langle T(Y,\nabla_\V f), \nabla f \rangle
\]
where $\| \nabla^2_\V f \|^2$ is the Hilbert-Schmidt norm of the vertical Hessian.
\end{proposition}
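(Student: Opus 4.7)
The plan is to decompose $\mathcal L=\Delta_{\mathcal V}+Y$ and write
\[
\Gamma_2(f)=\bigl[\tfrac12\Delta_{\mathcal V}\Gamma(f)-\Gamma(f,\Delta_{\mathcal V}f)\bigr]+\bigl[\tfrac12 Y\Gamma(f)-\Gamma(f,Yf)\bigr]=:A(f)+B(f),
\]
then treat $A$ and $B$ separately. Throughout I exploit the two defining features of the Hladky connection: metricity of $\nabla$, and preservation of the splitting $\mathcal H\oplus\mathcal V$ (so that $\nabla_U\nabla_{\mathcal V}g$ is vertical for every $U$).

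For the $Y$-piece, set $V:=\nabla_{\mathcal V}f$. Metricity gives $\tfrac12 Y|V|^2=\langle\nabla_Y V,V\rangle$, while the defining property $\langle V,\nabla_{\mathcal V}g\rangle=Vg$ (valid for vertical $V$) gives $\Gamma(f,Yf)=V(Yf)$. Expand $V(Yf)=Y(Vf)+[V,Y]f$, write $Y(Vf)=Y\langle V,\nabla f\rangle$ via metricity, and substitute $[V,Y]=\nabla_V Y-\nabla_Y V-T(V,Y)$. Using that $\nabla_Y\nabla f$ has vertical part $\nabla_Y\nabla_{\mathcal V}f=\nabla_Y V$, the two $\nabla_Y V$ contributions cancel and one is left with $B(f)=-\nabla Y(V,\nabla f)-\langle T(Y,V),\nabla f\rangle$, the last sign coming from antisymmetry of $T$.

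The $\Delta_{\mathcal V}$-piece is a vertical Bochner identity. Working in the adapted frame \eqref{frame} and writing $\Delta_{\mathcal V}g=\sum_\alpha\nabla^2_{\mathcal V}g(Z_\alpha,Z_\alpha)$, expand $\tfrac12\Delta_{\mathcal V}|V|^2$ by metricity: the quadratic piece $\sum_\alpha|\nabla_{Z_\alpha}V|^2=\|\nabla^2_{\mathcal V}f\|^2$ falls out because each $\nabla_{Z_\alpha}V$ is vertical. On the other side, expand $V\Delta_{\mathcal V}f=\sum_\alpha V\langle\nabla_{Z_\alpha}V,Z_\alpha\rangle$ and commute derivatives via the curvature identity
\[
\nabla_V\nabla_{Z_\alpha}V-\nabla_{Z_\alpha}\nabla_V V=R(V,Z_\alpha)V+\nabla_{[V,Z_\alpha]}V,
\]
resolving the bracket through $[V,Z_\alpha]=\nabla_V Z_\alpha-\nabla_{Z_\alpha}V-T(V,Z_\alpha)$ and the formulas \eqref{formulas}. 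After matching, the surviving curvature contribution is the vertical partial trace $\sum_\alpha\langle R(Z_\alpha,V)V,Z_\alpha\rangle$, and this coincides with the full Ricci $\mathbf{Ric}(V,V)$: for horizontal $X$ all three constituents of $R(X,V)V$ are vertical (thanks to preservation of $\mathcal V$), so $\langle R(X,V)V,X\rangle=0$ and the horizontal Ricci trace vanishes identically. Hence $A(f)=\|\nabla^2_{\mathcal V}f\|^2+\mathbf{Ric}(V,V)$, and adding $A(f)+B(f)$ yields the proposition.

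The hard part is the bookkeeping in $A(f)$ in the non-totally-geodesic setting. Horizontal components of $[V,Z_\alpha]$, governed by $\omega^k_{\beta i}$ and $\omega^\alpha_{\beta i}$, are genuinely nonzero, and they must combine precisely with the contributions produced by $\nabla_V Z_\alpha$ to reproduce $R(V,Z_\alpha)V$ without leaving residual structure coefficients. This clean recombination is exactly what the Hladky connection is engineered to deliver, and is what distinguishes the present argument from the totally-geodesic treatment in \cite{BaudoinEMS}.
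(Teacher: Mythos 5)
Your decomposition $\Gamma_2=A+B$ and the treatment of each piece match the paper's proof closely. For the $Y$--piece you work invariantly with the vector field $V=\nabla_\V f$ and the torsion identity $[V,Y]=\nabla_V Y-\nabla_Y V - T(V,Y)$, whereas the paper writes the same cancellation $\sum_\alpha(\nabla_Y Z_\alpha)f\,Z_\alpha f=0$ in an adapted frame; the two are equivalent and rest on the same two facts (metricity of $\nabla$, and $\nabla_Y V$ being vertical). For the $\Delta_\V$--piece you spell out the leaf Bochner identity and then observe that the horizontal partial trace of $\mathbf{Ric}(V,V)$ vanishes because $\nabla$ preserves $\V$; the paper simply cites the ``usual Bochner formula.'' All of this is correct.

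The one thing to fix is the closing paragraph. The brackets $[V,Z_\alpha]$ you worry about are brackets of two \emph{vertical} fields, and the leaves being integral submanifolds means $\V$ is involutive: $[V,Z_\alpha]$ is automatically vertical, and has no $\omega^k_{\beta i}$ or $\omega^\alpha_{\beta i}$ component. (Also $T(V,Z_\alpha)=0$ for two vertical arguments, by definition of the Hladky connection, so you may drop that term from your bracket identity.) As a consequence, the ``hard bookkeeping'' you describe does not occur in $\Gamma_2$: the restriction of $\nabla$ to vertical arguments is just the leafwise Levi--Civita connection, and Proposition \ref{Ga_2} is insensitive to whether the foliation is totally geodesic. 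The structure coefficients $\omega^k_{\beta i}$, $\omega^\alpha_{\beta i}$ (hence the non--totally--geodesic character) enter only in the \emph{horizontal} identity of Proposition \ref{Ga3}, through the brackets $[Z_\alpha,X_i]$. It is there, not here, that the Hladky connection earns its keep.
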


\begin{proof}
We split $\Gamma_2$ in two parts.  Let us first observe that from the usual Bochner's formula in Riemannian geometry, we have
\[
\frac{1}{2} (\Dv \| \nabla_V f\|^2- 2 \langle \nabla_\V f , \nabla_\V \Dv f \rangle )=\| \nabla^2_\V f \|^2 + \mathbf{Ric} (\nabla_\V f , \nabla_\V f).
\]
We then compute
\[
\frac{1}{2} (Y \| \nabla_\V f\|^2- 2 \langle \nabla_\V f , \nabla_\V Y f \rangle ),
\]
by introducing a local vertical orthonormal frame $Z_1,\cdots,Z_m$. In this frame we have
\begin{align*}
\frac{1}{2} (Y \| \nabla_\V f\|^2- 2 \langle \nabla_\V f , \nabla_\V Y f \rangle )&= \sum_{\alpha } (YZ_\alpha f)Z_\alpha f -\sum_{\alpha} (Z_\alpha Y f) Z_\alpha f \\
 & =\sum_{\alpha } [Y,Z_\alpha ] fZ_\alpha f \\
 &=\sum_{\alpha} (\nabla_Y Z_\alpha -\nabla_{Z_\alpha} Y -T(Y,Z_\alpha) )fZ_\alpha f 
\end{align*}
Since the covariant derivative of vertical fields is vertical and the connection $\nabla$ is metric, we have
\[
\sum_{\alpha} (\nabla_Y Z_\alpha )fZ_\alpha f =\sum_{\alpha,\beta} \langle \nabla_Y Z_\alpha ,Z_\beta \rangle Z_\alpha fZ_\beta f =0.
\]
The proof is then completed by putting the two pieces together.
\end{proof}

 We now define for $f,g \in C_0^\infty (\M)$,
 \[
\Gamma^\Ho_2(f,g)=\frac{1}{2} ( \mathcal L\langle \nabla_\Ho f,\nabla_\Ho g\rangle -\langle  \nabla_\Ho g, \nabla_\Ho  \mathcal Lf\rangle - \langle \nabla_\Ho f,\nabla_\Ho \mathcal Lg\rangle ).
\]

As before, as a shorthand notation, we will denote $\Gamma^\Ho_2(f):=\Gamma^\Ho_2(f,f)$. Before we proceed to the Bochner's identity for $\Gamma^\Ho_2$, let us introduce the relevant tensors. We define the following tensors for $f \in C_\infty(\M)$, $U \in \Gamma^\infty(\M)$, in the frame \eqref{frame}:

\[
\| \nabla^2_{\Ho,\V} f \|^2 =\sum_{i,\alpha} \left(  \langle \nabla_{Z_\alpha} \nabla f , X_i \rangle-T(Z_\alpha,X_i)_\Ho  \right)^2,
\]

\[
\tau(U)=-\sum_\alpha \nabla_{Z_\alpha} T (Z_\alpha,U)-\sum_\alpha T(Z_\alpha,T(Z_\alpha,U))-\sum_\alpha T(Z_\alpha,T(Z_\alpha,U))_\Ho.
\]

and 

\[
\Theta (U)=\sum_{\alpha ,\beta} \langle T(Z_\alpha , U), Z_\beta \rangle Z_\alpha^* \otimes Z_\beta^*.
\]

\begin{proposition}\label{Ga3}
For $f \in C_0^\infty(\M)$.
\begin{align*}
\Gamma^\Ho_2(f) =& \| \nabla^2_{\Ho,\V} f \|^2+2 \langle  \nabla^2_{\V} f , \Theta (\nabla_\Ho f) \rangle+ \mathbf{Ric}(\nabla_\Ho f , \nabla_\V f)+ \langle \tau (\nabla_\Ho f) , \nabla f \rangle \\
 &-\nabla Y(\nabla_\Ho f , \nabla f)-\langle T(Y,\nabla_\Ho f), \nabla f \rangle.
\end{align*}
where $\langle  \nabla^2_{\V} f , \Theta (\nabla_\Ho f) \rangle$ denotes the Hilbert-Schmidt inner product.
\end{proposition}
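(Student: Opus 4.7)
The natural strategy is to mirror the proof of Proposition \ref{Ga_2} by splitting
\[
\Gamma^{\mathcal{H}}_2(f)=A(f)+B(f),
\]
with
\[
A(f)=\tfrac{1}{2}\bigl(\Delta_{\mathcal{V}}\|\nabla_{\mathcal{H}}f\|^2-2\langle\nabla_{\mathcal{H}}f,\nabla_{\mathcal{H}}\Delta_{\mathcal{V}}f\rangle\bigr), \qquad
B(f)=\tfrac{1}{2}\bigl(Y\|\nabla_{\mathcal{H}}f\|^2-2\langle\nabla_{\mathcal{H}}f,\nabla_{\mathcal{H}}Yf\rangle\bigr),
\]
and treating each piece separately in a local adapted frame of the form \eqref{frame}.

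The drift part $B(f)$ is handled by the same argument as at the end of the proof of Proposition \ref{Ga_2}, but with horizontal basis vectors $X_i$ in place of vertical ones. One writes $B(f)=\sum_i [Y,X_i]f\cdot X_if$ and expands $[Y,X_i]=\nabla_Y X_i-\nabla_{X_i}Y-T(Y,X_i)$. Since the Hladky connection preserves $\mathcal{H}$ and is metric, $\sum_i (\nabla_Y X_i)f\cdot X_if=\sum_{i,j}\langle\nabla_Y X_i,X_j\rangle(X_jf)(X_if)$ vanishes by antisymmetry of the connection one-form, and the two remaining contributions yield exactly $-\nabla Y(\nabla_{\mathcal{H}}f,\nabla f)-\langle T(Y,\nabla_{\mathcal{H}}f),\nabla f\rangle$. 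The reason the full $\nabla f$ (rather than $\nabla_{\mathcal{H}}f$) appears as second argument is that $T(Y,X_i)$ generally has both a horizontal and a vertical component.

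The main content lies in $A(f)$. Using the vertical carr\'e du champ identity on each square $(X_if)^2$, one obtains
\[
A(f)=\sum_{i,\alpha}(Z_\alpha X_if)^2+\sum_i(X_if)\,[\Delta_{\mathcal{V}},X_i]f.
\]
In the quadratic sum, I would expand $Z_\alpha X_if=\langle\nabla_{Z_\alpha}\nabla f,X_i\rangle+(\nabla_{Z_\alpha}X_i)f$ and regroup using \eqref{formulas}, so that each term reads $\langle\nabla_{Z_\alpha}\nabla f,X_i\rangle-T(Z_\alpha,X_i)_{\mathcal{H}}f$ plus a residual linear in $\nabla_{\mathcal{V}}f$ whose coefficients involve $\langle T(Z_\alpha,X_i),Z_\beta\rangle$. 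Squaring then produces $\|\nabla^2_{\mathcal{H},\mathcal{V}}f\|^2$, together with the cross term $2\sum_{\alpha,\beta,i}\langle T(Z_\alpha,X_i),Z_\beta\rangle\,X_if\,\nabla^2_{\mathcal{V}}f(Z_\alpha,Z_\beta)$, which is exactly $2\langle\nabla^2_{\mathcal{V}}f,\Theta(\nabla_{\mathcal{H}}f)\rangle$, plus a purely first-order residual. For the commutator sum, I would iterate $[Z_\alpha,X_i]=\nabla_{Z_\alpha}X_i-\nabla_{X_i}Z_\alpha-T(Z_\alpha,X_i)$ inside $[\Delta_{\mathcal{V}},X_i]$: tracing produces a genuine curvature contribution that reorganizes as $\mathbf{Ric}(\nabla_{\mathcal{H}}f,\nabla_{\mathcal{V}}f)$, together with three torsion pieces matching the three summands in the definition of $\tau$, namely $\nabla_{Z_\alpha}T(Z_\alpha,\cdot)$ from differentiating the torsion and the two $T(Z_\alpha,T(Z_\alpha,\cdot))$ terms from the horizontal and vertical parts of the iterated torsion.

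The hard part will be the final bookkeeping: one has to check that after tracing over $\alpha$ and summing over $i$, all the terms produced by the structure constants of \eqref{frame} recombine precisely into the invariant tensors $\mathbf{Ric}$, $\tau$, $\Theta$ and $\nabla^2_{\mathcal{H},\mathcal{V}}$, and in particular that the first-order residuals from the quadratic sum cancel against their counterparts in the commutator sum. The conceptual payoff of working with the Hladky connection rather than Levi-Civita is that $\nabla_{Z_\alpha}X_i$ remains horizontal while $\nabla_{X_i}Z_\alpha$ remains vertical, so the spurious mixed terms that would appear with Levi-Civita are absent, and every residual torsion correction is exactly the one absorbed into the definitions of $\nabla^2_{\mathcal{H},\mathcal{V}}f$, $\Theta$ and $\tau$.
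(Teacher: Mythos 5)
Your overall plan is the same as the paper's: split $\Gamma^\Ho_2(f)=A(f)+B(f)$ into the vertical--Laplacian part and the drift part, dispose of $B(f)$ by the argument of Proposition~\ref{Ga_2} with $X_i$ replacing $Z_\alpha$, and then expand $A(f)=\sum_{\alpha,i}(Z_\alpha X_if)^2+\sum_i[\Delta_\V,X_i]f\,X_if$ in an adapted frame. All of that is correct.

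The flaw is in where you locate the $\Theta$ cross-term. Writing $Z_\alpha X_if=\langle\nabla_{Z_\alpha}\nabla f,X_i\rangle+\langle\nabla f,\nabla_{Z_\alpha}X_i\rangle$, note that $\nabla_{Z_\alpha}X_i$ is \emph{purely horizontal} by the defining property of the Hladky connection, and $T(Z_\alpha,X_i)_\Ho$ is purely horizontal by construction. Hence the residual $\langle\nabla f,\nabla_{Z_\alpha}X_i\rangle + T(Z_\alpha,X_i)_\Ho f$ is a combination of the $X_kf$'s only: it is \emph{not} linear in $\nabla_\V f$, and squaring $\sum_{\alpha,i}(Z_\alpha X_if)^2$ cannot create any vertical-Hessian term $\nabla^2_\V f(Z_\alpha,Z_\beta)$ whatsoever. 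So the identification of the $\Theta$ cross-term as coming out of the quadratic sum fails. In the paper's computation, $\Theta$ comes from the commutator sum: expanding $2\sum_{\alpha,i}Z_\alpha[Z_\alpha,X_i]f\,X_if$ with $[Z_\alpha,X_i]=\sum_k\omega_{\alpha i}^k X_k+\sum_\beta\omega_{\alpha i}^\beta Z_\beta$, the horizontal part $\omega_{\alpha i}^k X_k$ is what you complete the square with (producing $\|\nabla^2_{\Ho,\V}f\|^2$), while the vertical part $\omega_{\alpha i}^\beta Z_\beta$ produces the second-order term $\sum_{\alpha,\beta,i}\omega_{\alpha i}^\beta Z_\alpha Z_\beta f\,X_if$, which is the $\Theta$--Hilbert--Schmidt pairing at the center of the normal frame. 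Consequently your description of the commutator sum as producing only $\mathbf{Ric}$ and the three $\tau$-type torsion pieces is incomplete: it must also supply $2\langle\nabla^2_\V f,\Theta(\nabla_\Ho f)\rangle$, and you will need to account for this before the first-order bookkeeping can close.
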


\begin{proof}
We shall prove this identity at the center of the local frame \eqref{frame}. It is easy to see, working with normal coordinates on the leaves, that we can assume that at the center of the frame $\omega_{\alpha \beta}^\gamma =0$. Using the chain rule we  see that
\[
\Gamma^\Ho_2(f)= \sum_{\alpha,i} \left( Z_\alpha X_i f \right)^2 +\sum_i [\Delta_\V ,X_i] f X_i f +\sum_i [Y,X_i] f X_i f.
\]
Repeating the argument of the proof of Proposition \ref{Ga_2}, we have
\[
\sum_i [Y,X_i] f X_i f=-\nabla Y(\nabla_\Ho f , \nabla f)-\langle T(Y,\nabla_\Ho f), \nabla f \rangle.
\]
We now compute that at the center of the frame
\begin{align*}
\sum_i [\Delta_\V ,X_i] f X_i f & =\sum_{\alpha,i} [Z_\alpha^2 ,X_i]f X_i f -\sum_{i,\alpha,\beta} [ \omega_{\alpha \beta}^\beta Z_\alpha,X_i ]f X_i f \\
 &=2 \sum_{\alpha,i} Z_\alpha [Z_\alpha ,X_i]f X_if -\sum_{\alpha,i} [Z_\alpha, [Z_\alpha ,X_i]]f X_if +\sum_{i,\alpha,\beta} (X_i \omega_{\alpha \beta}^\beta)  Z_\alpha f X_i f .
\end{align*}
Expanding the previous inequality by using the structure constants and completing then the squares, yields
\begin{align*}
 & \sum_{\alpha,j} \left( Z_\alpha X_j f \right)^2 +\sum_i [\Delta_\V ,X_i] f X_i f \\
=&  \sum_{\alpha,j} \left( Z_\alpha X_j f +\sum_i \omega_{\alpha i}^j X_i f \right)^2 +2 \sum_{i,\alpha,\beta} \omega_{\alpha i}^\beta Z_\alpha Z_\beta f X_if \\
 &  +\sum_{\beta,i} \left(\sum_\alpha Z_\alpha \omega_{\alpha i}^\beta -\sum_{\alpha , j} \omega_{\alpha i}^j \omega_{\alpha j}^\beta + \sum_\alpha X_i \omega_{\beta \alpha}^\alpha   \right) X_i f Z_\beta f \\
 & +\sum_{i,j} \left(\sum_\alpha Z_\alpha \omega_{\alpha i}^j -\sum_{\alpha , l} \omega_{\alpha i}^l  ( \omega_{\alpha l}^j   +\omega_{\alpha j}^l ) \right) X_i f X_j f.
\end{align*}

It is then a direct, but tedious, exercise to check that

\begin{align*}
 & \sum_{\beta,i} \left(\sum_\alpha Z_\alpha \omega_{\alpha i}^\beta -\sum_{\alpha , j} \omega_{\alpha i}^j \omega_{\alpha j}^\beta + \sum_\alpha X_i \omega_{\beta \alpha}^\alpha   \right) X_i f Z_\beta f 
 +\sum_{i,j} \left(\sum_\alpha Z_\alpha \omega_{\alpha i}^j -\sum_{\alpha , l} \omega_{\alpha i}^l  ( \omega_{\alpha l}^j   +\omega_{\alpha j}^l ) \right) X_i f X_j f \\
 =& \mathbf{Ric}(\nabla_\Ho f , \nabla_\V f)-\langle \sum_\beta \nabla_{Z_\beta} T (Z_\beta , \nabla_\Ho f) , \nabla f \rangle  
  -\langle  \sum_\beta T(Z_\beta, T(Z_\beta, \nabla_\Ho f)), \nabla f \rangle \\
   & -\langle  \sum_\beta T(Z_\beta, T(Z_\beta, \nabla_\Ho f)), \nabla_\Ho f \rangle.
\end{align*}
\end{proof}

We observe that the computations considerably simplify if the foliation is bundle like and totally geodesic (close computations in that case have already  been done in \cite{BaudoinEMS} Theorem 7.2).

\begin{corollary}
Assume that the foliation is bundle like and totally geodesic, then for $f \in C_0^\infty(\M)$.
\begin{align*}
\Gamma^\Ho_2(f) =& \| \nabla^2_{\Ho,\V} f \|^2-\nabla Y(\nabla_\Ho f , \nabla f)-\langle T(Y,\nabla_\Ho f), \nabla f \rangle.
\end{align*}
 \end{corollary}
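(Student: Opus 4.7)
The strategy is to specialize Proposition \ref{Ga3} and to show that, under the two additional hypotheses, the three tensorial contributions $\Theta(\nabla_\Ho f)$, $\tau(\nabla_\Ho f)$, and $\mathbf{Ric}(\nabla_\Ho f,\nabla_\V f)$ all vanish identically.

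The first observation is that the two structural conditions combine into a single statement about the torsion: bundle-likeness gives $\omega_{\beta i}^k+\omega_{\beta k}^i=0$, which annihilates the horizontal part of $T(Z_\beta,X_j)$ in \eqref{formulas}, while total geodesicity gives $\omega_{\beta i}^\alpha+\omega_{\alpha i}^\beta=0$, which annihilates the vertical part. Hence $T(Z_\alpha,X_i)=0$ in every adapted frame. Given this, the definition of $\Theta$ yields $\Theta(\nabla_\Ho f)=0$ immediately, and each of the three summands defining $\tau(\nabla_\Ho f)$ also vanishes since each one is formed from $T(Z_\alpha,\nabla_\Ho f)$ or $\nabla_{Z_\alpha}T(Z_\alpha,\nabla_\Ho f)$. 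Moreover, the defining expression for $\|\nabla^2_{\Ho,\V} f\|^2$ collapses to the Hilbert-Schmidt norm $\sum_{i,\alpha}\langle\nabla_{Z_\alpha}\nabla f,X_i\rangle^2$ of the mixed Hessian.

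The main obstacle is to verify that the mixed Ricci term also vanishes. Since the Hladky connection preserves both $\Ho$ and $\V$, the endomorphism $W\mapsto R(W,X)Z$ is $\V$-valued whenever $Z\in\Gamma^\infty(\V)$, so the horizontal contribution to the trace is zero and $\mathbf{Ric}(X,Z)=\sum_\alpha\langle R(Z_\alpha,X)Z,Z_\alpha\rangle$. To handle the remaining sum I would work at the centre of an adapted frame in which $\omega_{\alpha\beta}^\gamma(p)=0$ by using normal coordinates on the leaf, expand $R(Z_\alpha,X_j)Z_\beta$ using the explicit formulas \eqref{formulas}, and apply the antisymmetries $\omega_{\alpha i}^j=-\omega_{\alpha j}^i$ and $\omega_{\alpha j}^\beta=-\omega_{\beta j}^\alpha$. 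The quadratic coefficients group into contractions such as $\sum_\alpha\omega_{\alpha j}^k\omega_{\alpha k}^\beta$, which become antisymmetric after one application of the totally geodesic identity and therefore cancel upon summation. This argument parallels, in a simplified form, the last tedious step in the proof of Proposition \ref{Ga3}, and is precisely the specialization of the computation already carried out in \cite{BaudoinEMS} Theorem 7.2.

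Substituting $\Theta(\nabla_\Ho f)=0$, $\tau(\nabla_\Ho f)=0$, and $\mathbf{Ric}(\nabla_\Ho f,\nabla_\V f)=0$ into the tensorial identity of Proposition \ref{Ga3} yields exactly the expression of the corollary.
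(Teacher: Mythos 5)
Your route is genuinely different from the paper's. The paper observes that a bundle-like totally geodesic foliation locally comes from a totally geodesic Riemannian submersion, chooses the $X_i$ to be \emph{basic} (so that $[Z_\alpha,X_i]$ is vertical, i.e.\ $\omega_{\alpha i}^k=0$), and invokes the B\'erard-Bergery--Bourguignon commutation $[\Delta_\V,X_i]=0$; the corollary then drops out of the first display in the proof of Proposition \ref{Ga3} with almost no work. You instead stay with the general tensorial identity of Proposition \ref{Ga3} and argue that the extra tensors $\Theta$, $\tau$ and the mixed Ricci vanish. This is a legitimate alternative and in some ways more self-contained, since it does not import the submersion commutation theorem.

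The parts you do carefully are correct. From \eqref{formulas}, bundle-likeness kills the horizontal component of $T(Z_\beta,X_j)$ and total geodesicity kills the vertical one, so $T(Z_\alpha,X_i)\equiv 0$. This immediately gives $\Theta(\nabla_\Ho f)=0$. For $\tau$ one needs slightly more than ``pointwise vanishing'': since $\nabla$ preserves both $\Ho$ and $\V$, the covariant derivative $(\nabla_W T)(Z,X)=\nabla_W\bigl(T(Z,X)\bigr)-T(\nabla_W Z,X)-T(Z,\nabla_W X)$ has all three terms equal to zero, so $\nabla T$ also annihilates $\V\times\Ho$ and all three summands of $\tau(\nabla_\Ho f)$ vanish. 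That is fine. The collapse of $\|\nabla^2_{\Ho,\V}f\|^2$ to $\sum_{i,\alpha}\langle\nabla_{Z_\alpha}\nabla f,X_i\rangle^2$ is also correct.

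The gap is in the mixed Ricci term. Your sketch only treats the \emph{quadratic} contractions $\sum_\alpha\omega_{\alpha j}^k\omega_{\alpha k}^\beta$, but the Ricci tensor of the Hladky connection, written out as in the final display of the proof of Proposition \ref{Ga3}, contains \emph{first-derivative} terms of the structure constants, namely $\sum_\alpha Z_\alpha\omega_{\alpha i}^\beta$ and $\sum_\alpha X_i\omega_{\beta\alpha}^\alpha$, which do not obviously cancel by antisymmetry and are not addressed by your argument. A clean way to close this is to avoid the frame computation altogether and use the first Bianchi identity with torsion,
\[
\mathfrak{S}_{U,V,W}\,R(U,V)W=\mathfrak{S}_{U,V,W}\bigl[(\nabla_U T)(V,W)+T(T(U,V),W)\bigr],
\]
applied to $(U,V,W)=(Z_\alpha,X,Z_\beta)$. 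Since $T$ and $\nabla T$ vanish on $\V\times\V$ and on $\V\times\Ho$ under your hypotheses, the right-hand side is zero; pairing the resulting identity with $Z_\alpha$ and summing over $\alpha$, the term $\langle R(X,Z_\beta)Z_\alpha,Z_\alpha\rangle$ vanishes by metricity (skew-symmetry of $R(X,Z_\beta)$) and the term $\langle R(Z_\beta,Z_\alpha)X,Z_\alpha\rangle$ vanishes because $R(Z_\beta,Z_\alpha)X\in\Ho$. What remains is precisely $\mathbf{Ric}(X,Z_\beta)=0$. With that supplied, your proof is complete and stands as a genuine alternative to the paper's; the trade-off is that you re-prove a curvature vanishing by hand where the paper reduces the whole corollary to a known commutation theorem for basic fields.
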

 
 \begin{proof}
 When  the foliation is bundle like and totally geodesic, we can assume that the foliation comes from a totally geodesic submersion. The horizontal $X_i$ can then be chosen to be formed of basic vector fields.  As we have seen above, we have
 \[
\Gamma^\Ho_2(f)= \sum_{\alpha,i} \left( Z_\alpha X_i f \right)^2 +\sum_i [\Delta_\V ,X_i] f X_i f +\sum_i [Y,X_i] f X_i f.
\]
From \cite{BeBo}, we have $[\Delta_\V ,X_i] =0$, thus
 \[
\Gamma^\Ho_2(f)= \sum_{\alpha,i} \left( Z_\alpha X_i f \right)^2 +\sum_i [Y,X_i] f X_i f,
\]
and the conclusion easily follows.
 \end{proof}

\subsection{Gradient bounds for the semigroup}

With the computations of $\Gamma_2$ and $\Gamma_2^\Ho$ in hands, we can now argue as in \cite{baudoin-bakry, BaudoinEMS} to deduce criteria for hypocoercivity, that is a quantitative convergence to equilibrium for the semigroup generated by $\mathcal L$. For the sake of completeness, we recall the main results.

\

An analytic difficulty that arises when studying Kolmogorov type operators is that, in general, they are not symmetric with respect to any measure. As a consequence, we can not use functional analysis and the spectral theory of self-adjoint operators to define the semigroup generated by $L$. A typical assumption to ensure that $L$ generates a well-behaved semigroup is the existence of a nice Lyapunov function.  So, in the sequel,  we will  assume that there exists a function $W$ such that $W \ge 1$, $\| \nabla W \| \le C W$, $LW \le C W$ for some constant $C>0$ and $\{ W \le m \}$ is compact for every $m$.  The assumption about the existence of this function $W$ such that $LW \le CW$  implies that $L$ is the generator of a Markov semigroup $(P_t)_{t \ge 0}$ that uniquely solves the heat equation in $L^\infty$. 

 If $f \in C^\infty(\M)$, we denote
\[
\mathcal{T}_2(f)=\frac{1}{2} \left( \mathcal L (\| \nabla f \|^2) -2\langle \nabla f , \nabla \mathcal L f \rangle \right),
\]
where $\nabla$ is the whole Riemannian gradient, that is $\nabla=\nabla_\Ho+\nabla_\V$. From Propositions \ref{Ga_2} and \ref{Ga3}, we have
\begin{align*}
\mathcal{T}_2(f) = & \| \nabla^2_{\V} f \|^2+ \| \nabla^2_{\Ho,\V} f \|^2+\| \nabla^2_\V f \|^2+2 \langle  \nabla^2_{\V} f , \Theta (\nabla_\Ho f) \rangle+ \mathbf{Ric}(\nabla f , \nabla_\V f) \\
 & + \langle \tau (\nabla_\Ho f) , \nabla f \rangle -\nabla Y(\nabla f , \nabla f)-\langle T(Y,\nabla f), \nabla f \rangle .
 \end{align*}
 
 This implies
 \begin{align*}
\mathcal{T}_2(f) \ge & \mathbf{Ric}(\nabla f , \nabla_\V f)- \| \Theta (\nabla_\Ho f) \|^2+ \langle \tau (\nabla_\Ho f) , \nabla f \rangle -\nabla Y(\nabla f , \nabla f)  -\langle T(Y,\nabla f), \nabla f \rangle.
\end{align*}

\begin{proposition}\label{GB18}
Let us assume that for some $K \in \R$,
\[
\mathcal{T}_2(f) \ge -K \| \nabla f \|^2,
\]
then for every bounded and Lipchitz function $f \in C^\infty(\M)$, we have for $t \ge 0$
\[
\| \nabla P_t f \|^2 \le e^{2K t} P_t (\| \nabla f \|^2).
\]
\end{proposition}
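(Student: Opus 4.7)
The plan is to use the classical semigroup interpolation argument due to Bakry and \'Emery. Fix $t>0$ and a bounded Lipschitz $f \in C^\infty(\M)$, and introduce the interpolation
\[
\phi(s) = P_s\!\left( \| \nabla P_{t-s} f \|^2 \right), \qquad s \in [0,t].
\]
The goal is to show that $s \mapsto e^{2Ks}\phi(s)$ is non-decreasing on $[0,t]$, so that $\phi(0) \le e^{2Kt}\phi(t)$, which is exactly the desired inequality since $\phi(0) = \| \nabla P_t f \|^2$ and $\phi(t) = P_t( \| \nabla f \|^2)$.

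The derivative computation is the core of the argument. Formally differentiating under $P_s$, using the Kolmogorov equation $\partial_s P_{t-s} f = -\mathcal{L} P_{t-s} f$, and the definition of $\mathcal{T}_2$, one obtains
\[
\phi'(s) = P_s\!\left( \mathcal{L} \| \nabla P_{t-s} f \|^2 - 2 \langle \nabla P_{t-s} f , \nabla \mathcal{L} P_{t-s} f \rangle \right) = 2\, P_s\!\left( \mathcal{T}_2(P_{t-s} f) \right).
\]
Invoking the assumption $\mathcal{T}_2(g) \ge -K \|\nabla g\|^2$ with $g = P_{t-s}f$ and the positivity of $P_s$ then yields the differential inequality
\[
\phi'(s) \ge -2K\, P_s\!\left( \| \nabla P_{t-s} f \|^2 \right) = -2K\, \phi(s).
\]
A straightforward Gr\"onwall argument on $[0,t]$ gives the claim.

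The main obstacle is justifying the formal manipulations in the hypoelliptic, non-symmetric, non-compact setting. Since $\mathcal L$ is not symmetric with respect to any a priori measure, $P_s$ must be handled through the Lyapunov function $W$: the bound $\mathcal L W \le C W$ allows us to localize against cutoffs built from $W$ and to ensure that $P_s$ is well defined on bounded functions and preserves enough regularity. Hypoellipticity of $\mathcal L$ gives smoothness of $P_{t-s}f$ in the interior of $[0,t]$, and the Lipschitz control on $f$ together with standard semigroup estimates bound $\nabla P_{t-s} f$ uniformly. One then applies the pointwise identity $\phi'(s) = 2 P_s(\mathcal{T}_2(P_{t-s}f))$ on a sequence of relatively compact sublevel sets $\{W \le m\}$, using integration against $\chi_m \circ W$ and the dominated convergence theorem (with $W$ as the dominating function) to pass to the limit. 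With these technical points secured, Gr\"onwall's lemma applied to $s \mapsto e^{2Ks}\phi(s)$ closes the proof.
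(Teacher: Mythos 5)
Your argument is the standard Bakry--\'Emery semigroup interpolation, which is precisely what the paper's one-line proof by citation to \cite{FYWang2} (and \cite{BaudoinEMS}) amounts to; the derivative identity $\phi'(s)=2P_s\left(\mathcal{T}_2(P_{t-s}f)\right)$, the positivity of $P_s$, and the Gr\"onwall step are all correct. Your remarks on the technical issues (non-symmetry of $\mathcal{L}$, localization via the Lyapunov function $W$ with $\mathcal{L}W\le CW$, hypoellipticity for interior smoothness) correctly identify where the cited reference does the real work, and they match the standing assumptions the paper introduces for exactly this purpose.
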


\begin{proof}
This is a consequence of \cite{FYWang2}  (see also \cite{BaudoinEMS}).
\end{proof}

\subsection{Convergence to equilibrium in $H^1$}

As  in Theorem 7.6 in \cite{BaudoinEMS}, which only concerned the bundle like and totally geodesic case, we therefore obtain:

\begin{corollary}\label{poinca}
Assume that there exist two constants $\rho_1 \ge 0$, $\rho_2 >0$ such that for every $f \in C_0^\infty(\M)$,
\begin{align}\label{pointwise}
\mathcal{T}_2(f) \ge -\rho_1 \| \nabla_\V f \|^2 +\rho_2 \| \nabla_\Ho f \|^2.
\end{align}

Assume moreover that  the operator $\mathcal L$ admits  an invariant  probability measure $\mu$ that satisfies the Poincar\'e inequality
\[
\int_{\M}\| \nabla f \|^2 d\mu \ge \kappa \left[ \int_{\M} f^2 d\mu -\left( \int_{\M} f d\mu\right)^2 \right].
\]
Then, for every bounded and Lipschitz function $f$ such that  $\int_{\M} f d\mu=0$,
\begin{align*}
  (\rho_1+\rho_2)\int_{\M} (P_t f)^2d\mu +\int_{\M} \| \nabla P_t f \|^2d\mu  
 \le   e^{-\lambda t}\left(   (\rho_1+\rho_2) \int_{\M} f^2d\mu + \int_{\M} \| \nabla f \|^2 d\mu \right) ,
\end{align*}
where $\lambda=\frac{2\rho_2 \kappa}{\kappa+\rho_1+\rho_2}$.
\end{corollary}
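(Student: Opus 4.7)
The plan is to study the functional
\[
\Phi(t) := (\rho_1+\rho_2)\int_{\M} (P_t f)^2 \, d\mu + \int_{\M} \|\nabla P_t f\|^2 \, d\mu,
\]
and prove a differential inequality $\Phi'(t) \le -\lambda \Phi(t)$, from which Gronwall yields the stated exponential decay. The two building blocks are (i) the $L^2$ dissipation identity and (ii) an $H^1$ dissipation identity involving $\mathcal T_2$.

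\textbf{Step 1: Dissipation identities.} Using the invariance of $\mu$ (so that $\int \mathcal L g \, d\mu = 0$ for suitable $g$) together with the identity $\Gamma(g) = \|\nabla_\V g\|^2$, I would first establish
\[
\frac{d}{dt}\int_{\M} (P_t f)^2 d\mu = -2\int_{\M} \|\nabla_\V P_t f\|^2 d\mu.
\]
Next, from the definition $\mathcal T_2(g) = \tfrac12(\mathcal L\|\nabla g\|^2 - 2\langle \nabla g,\nabla \mathcal L g\rangle)$ and invariance of $\mu$, one obtains
\[
\frac{d}{dt}\int_{\M} \|\nabla P_t f\|^2 d\mu = -2\int_{\M} \mathcal T_2(P_t f)\, d\mu.
\]
Both require justifying differentiation under the integral and the cancellation $\int \mathcal L(\cdot)\, d\mu =0$, which is where the Lyapunov function $W$ comes in: standard approximation/cutoff arguments together with $\|\nabla P_t f\|^2 \le e^{2Kt} P_t(\|\nabla f\|^2)$ from Proposition \ref{GB18} provide enough integrability (this is the technical but well-understood part; cf.\ the arguments of \cite{baudoin-bakry,BaudoinEMS}).

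\textbf{Step 2: Combining via the hypothesis.} Injecting the pointwise bound \eqref{pointwise} into the $H^1$ identity yields
\[
\frac{d}{dt}\int_{\M} \|\nabla P_t f\|^2 d\mu \le 2\rho_1 \int_{\M}\|\nabla_\V P_t f\|^2 d\mu - 2\rho_2 \int_{\M}\|\nabla_\Ho P_t f\|^2 d\mu.
\]
Multiplying the $L^2$ identity by $(\rho_1+\rho_2)$ and adding, the vertical terms combine as $-2(\rho_1+\rho_2)\|\nabla_\V P_t f\|_{L^2}^2 + 2\rho_1 \|\nabla_\V P_t f\|_{L^2}^2 = -2\rho_2 \|\nabla_\V P_t f\|_{L^2}^2$, and I obtain the clean inequality
\[
\Phi'(t) \le -2\rho_2 \int_{\M} \|\nabla P_t f\|^2 d\mu.
\]

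\textbf{Step 3: Closing with Poincar\'e.} Since $\int P_t f\, d\mu = \int f\, d\mu = 0$ by invariance, the Poincar\'e inequality gives $\int (P_tf)^2 d\mu \le \kappa^{-1}\int \|\nabla P_t f\|^2 d\mu$, hence
\[
\Phi(t) \le \frac{\kappa+\rho_1+\rho_2}{\kappa}\int_{\M}\|\nabla P_t f\|^2 d\mu.
\]
Combining with Step 2, $\Phi'(t) \le -\tfrac{2\rho_2\kappa}{\kappa+\rho_1+\rho_2}\,\Phi(t) = -\lambda\,\Phi(t)$, and Gronwall finishes the proof.

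\textbf{Main obstacle.} The substantive content is purely algebraic and lies in Steps 2--3; no inequality is wasted, which is why the constant $\lambda = 2\rho_2\kappa/(\kappa+\rho_1+\rho_2)$ comes out cleanly. The only genuine difficulty is the analytic justification in Step 1: one must ensure that the formal computations on $\Phi'(t)$ are rigorous for merely bounded Lipschitz $f$ on a possibly non-compact $\M$ where $\mathcal L$ is hypoelliptic and non-symmetric. Here the assumed Lyapunov function $W$ with $\mathcal LW \le CW$, together with the gradient bound of Proposition \ref{GB18}, controls tails and commutes limits with integrals, exactly as in \cite{baudoin-bakry,BaudoinEMS}.
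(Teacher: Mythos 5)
Your proof is correct and follows essentially the same route as the paper: combine the pointwise generalized Bakry--\'Emery bound with the Poincar\'e inequality via a Gronwall argument on a weighted $H^1$-type functional. The only cosmetic difference is that you differentiate $\Phi(t)=\int g(P_tf)\,d\mu$ directly in $t$ using invariance of $\mu$, whereas the paper's analogous argument (Proposition~\ref{PropH1}) interpolates pointwise along $\Lambda_s=P_{t-s}\bigl(g(P_sf)\bigr)$ and only integrates against $\mu$ at the end; after integration the two computations coincide, and your choice of the constant $\eta$ is replaced by your choice of the weight $\rho_1+\rho_2$ in $\Phi$, yielding the same rate $\lambda=\frac{2\rho_2\kappa}{\kappa+\rho_1+\rho_2}$.
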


\section{Hypocoercivity of the velocity spherical Brownian motion}
\subsection{Introduction}

The so-called velocity spherical Brownian motion on the unit tangent bundle of a Riemannian manifold $T^{1} \mathcal{M}$ is introduced in  \cite{ABT} (where it is called the kinetic Brownian motion). It is a two-parameters family of hypoelliptic diffusion on $T^1 \mathcal{M}$ which is a perturbation of the geodesic flow by a vertical Laplacian on the fibers. The velocity spherical Brownian motion is a Kolmogorov type process  which is similar to the Langevin process. The difference being that the velocities have Brownian dynamics on the compact fibers (tangent sphere) whereas for the Langevin process,  the velocities have Orstein-Uhlenbeck dynamics on the (non-compact) tangent space. It is shown in \cite{ABT} that when the parameters go to infinity, the process projected on the base Riemmanian manifold converges in law to a Brownian motion. In this section we obtain, under the condition that the Riemannian tensor of the base manifold is bounded, that the velocity spherical Brownian motion  converges in $H^1$ and in $L^2$, when $t$ goes to infinity,  to the equilibrium measure (the renormalized Riemannian volume on $T^1(\mathcal{M})$) at some exponential rate. This rate can be expressed explicitly in terms of the parameters. It is expected that the optimal rate converges, when the parameters go to infinity, to the spectral gap of the base manifold, but unfortunately the rate obtained converges to $0$ and we do not reach the base manifold spectral gap. 

The idea to obtain this exponential rate of convergence is to find a definite-positive quadratic tensor $\mathcal{T}$ on $T^1 \mathcal{M}$ such that there is a  generalized Bakry-\'Emery rstimate
\[
\mathcal{T}_2 \geq \rho \mathcal{T} - K \Gamma.
\]
where $\rho$ is a strictly positive constant. We obtain this local inequality under the condition that the Riemann tensor of $\mathcal{M}$ is bounded. 
 
As explained in the previous section (see also \cite{baudoin-bakry}, \cite{baudoinAMS}) this inequality, together with a Poincar\'e inequality on $T^{1} \mathcal{M}$ provides the exponential convergence to equilibrium in $H^1$ norm. This scheme is close to Talay article \cite{Talay} and Villani's book \cite{Villani1}. Nevertheless in our case the bracket condition of Villani's Theorem is not fulfilled and one cannot apply directly his result. 

To obtain convergence in $L^2$ norm we prove some regularization estimates using methods inspired by H\'erau's work \cite{Herau}. Again, comparing to the case of the kinetic Fokker-Planck equation, our computations are more complicated because of the much more intricate Lie algebra structure for the generator.

\subsection{Geometric framework}

Let $\mathcal{M}$ be a connected, oriented and complete Riemannian  $n$-manifold. Denote by $O\mathcal{M}$ the orthonormal frame bundle. For  $x\in \mathcal{M}$ one denotes by $\mathbf{e} = (e^0, e^1,\dots,e^{n-1})$ an orthonormal frame in $T_{x} \mathcal{M}$. Denoting by $T^1\mathcal{M}$ the unit tangent bundle one defines the bundle projection $\pi$ by 
\begin{align*}
\pi : \  & O\mathcal{M}  \longrightarrow  T^1 \mathcal{M} \\ 
& (x, \mathbf{e})  \longmapsto  (x, e^0).
\end{align*}
Given a local chart $(x_i)_{i=0,\dots, n-1}$ in $\mathcal{M}$ and denoting by $(e_k^{j})_{k}$ the coordinates of $e^{j}$ one defines  for $i=1,\dots, n-1$ the \emph{vertical} vector fields on $O\mathcal{M}$ by
\[
V_i := \sum_{k=0}^{n-1} e_k^{i} \partial_{e_k^{0}} -e_k^{0} \partial_{e_k^{i}}.
\]
Define also the \emph{horizontal} vector fields for $i_0=0,\dots, n-1$ by
\[
H_{i_0} :=   e^{i_0}_{k} \partial_{x_k} - \Gamma^{l}_{ij} e^{k}_i e^{i_0}_j \partial_{e^{k}_l}.
\]
The generator $\Op:= \frac{\sigma^2}{2} \Delta^{v} + \kappa \xi$ of the velocity spherical Brownian motion $T^1 \mathcal{M}$ is the projection of 
\[
\tilde{\Op} := \frac{\sigma^2}{2} \sum_{i=1}^{n-1} V_i^{2} + \kappa H_0,
\]
in the sense that 
\[
\forall f \in C^2(T^1 \mathcal{M}), \quad \tilde{\Op} (f \circ \pi) = (\Op f) \circ \pi.
\]
Here are the fundamental relations involving Lie brackets of the vertical and horizontal vectors fields, ensuring that $\Op$ satisfies H\"ormander condition and is therefore hypoelliptic: 
\[
[V_i, H_0 ] = H_i , \quad [V_j , [V_i , H_0 ] ] =- \delta_{ij} H_0, \quad [H_0, H_i] =  \sum_{j=1}^{n-1}\langle R( e^{i}, e^{0}) e^{0}, e^{j} \rangle V_j
\]
where $R$ is the Riemannian curvature tensor on $\mathcal{M}$ and $\langle \cdot, \cdot \rangle$ is the metric on $\mathcal{M}$. 

\subsection{$\Gamma$-calculus for the velocity spherical Brownian motion}

The tangent space of $T^1\mathcal{M}$ splits up in a direct sum of a vertical part (generated by $\pi^* V_i$, $i=1,\dots, n-1$ ) and the horizontal part (generated by $\pi^* H_i$, $i=0, \dots, n-1$). This horizontal part split up in the direction $\xi= \pi^* H_0$ and its orthogonal (for the Sasaki metric) generated by $\pi^* H_i$, for $i=1,\dots, n-1$ which will be denoted $\tilde{\mathcal{H}}$. So 
\[
T(T^1 \mathcal{M}) = \mathcal{V} \oplus \tilde{\mathcal{H}} \oplus \mathrm{Vect}(\xi).
\]
We introduce for each of those subspace the Gamma operator associated ($\Gamma^{v}$, $\Gamma^{\tilde{h}}$ and $\Gamma^{\xi}$) and the mixed Gamma $\Gamma^{v,\tilde{h}}$. For smooth functions $f,g : T^1 \mathcal{M} \to \R$ define
\begin{align*}
\Gamma^{v}(f,g) &:= \sum_{i=1}^{n-1} V_i (f \circ \pi )  V_i(g \circ \pi)  \\
\Gamma^{v,\tilde{h}}(f,g) &:=\frac{1}{2} \sum_{i=1}^{n-1} H_i (f \circ \pi)  V_i (g \circ \pi) + H_i (g \circ \pi)  V_i (f \circ \pi) \\ 
\Gamma^{\tilde{h}} (f,g) &:= \sum_{i=1}^{n-1} H_i (f\circ \pi)  H_i (g \circ \pi)  \\ 
\Gamma^{\xi}(f,g) &:= H_0(f \circ \pi)  H_0 (g \circ \pi).
\end{align*}
And define also for any $\Gamma$ the corresponding $\Gamma_2$ by the formula 
\[
\Gamma_2 (f,g) := \frac{1}{2} \left ( \Op ( \Gamma(f,g) ) - \Gamma(\Op f , g) -\Gamma(f, \Op g) \right ).
\]
Denote also $\Gamma(f,f)$ by $\Gamma(f)$. 

The vertical and horizontal gradient of a function $f: T^1 \mathcal{M} \to \R$ are given by: 
\begin{align*}
\nabla^{v} f (x, \mathbf{e}) &:=  \sum_{i=1}^{n-1} V_i (f \circ \pi) e^{i}  \\
\nabla^{h} f (x, \mathbf{e}) &:= \sum_{i=0}^{n-1}  H_i (f \circ \pi) e^{i}
\end{align*}
Note that $\Vert \nabla^{v} f \Vert^2 = \sum_{i=1}^{n-1} (V_i (f \circ \pi) )^2$ and $\Vert \nabla^{h} f \Vert^2 = \sum_{i=0}^{n-1} (H_i (f \circ \pi))^2 $ and consider also 
\begin{align*}
\Vert \nabla^{\tilde{h} } f \Vert^2 &:= \Gamma^{\tilde{h}} (f) = \sum_{i=1}^{n-1} (H_i (f \circ \pi))^2 \\
\Vert \nabla^{\xi} f \Vert^2 &:= \Gamma^{\xi} (f) = (\xi(f) )^2 = (H_0 (f\circ \pi) ) ^2.
\end{align*}
Thus $\Vert \nabla^{h} f \Vert^2  = \Vert \nabla^{\tilde{h}} f \Vert^2  + \Vert \nabla^{\xi} f \Vert^2$. The underlying metric we use on $T^1\mathcal{M}$ is the Sasaki metric, which makes orthogonal the decomposition $T(T^1 \mathcal{M}) = \mathcal{V} \oplus \tilde{\mathcal{H}} \oplus \mathrm{Vect}(\xi)$.

Introduce also for $i,j =1, \dots, n-1$ the Hessian terms which appears in the computations of the iterated Gamma 
\begin{align*}
\mathrm{Hess}_{i,j}^{v}(f) &:=  V_{i}V_{j}(f \circ \pi) \\ 
\mathrm{Hess}_{i,j}^{v, \tilde{h}} (f) &:= V_i H_j (f \circ \pi) \\ 
\mathrm{Hess}_{i}^{v,\xi} (f) &:= V_i H_0 (f\circ \pi).
\end{align*}
The following lemma gives the explicit expressions of the iterated $\Gamma_2$ for each $\Gamma$ considered (vertical, horizontals, mixed). 
\begin{Lem}\label{gamma2calc}
One has
\begin{align*}
\Gamma^{v}_2 (f) &=\frac{\sigma^2}{2} \left (  \Vert \mathrm{Hess}^v (f)  \Vert^2 + (n-2) \Vert \nabla^{v} f  \Vert^2 \right ) + \kappa \langle \nabla^{v} f, \nabla^{\tilde{h}} f \rangle \\ 
\Gamma^{v,\tilde{h}}_2 (f) &= \frac{\sigma^2}{2} \left ( \langle \mathrm{Hess}^v f , \mathrm{Hess}^{v,\tilde{h}} f    \rangle  + \frac{n-1}{2} \langle  \nabla^{v} f, \nabla^{\tilde{h}} f \rangle  -\langle \mathrm{Hess}^{v,\xi}(f) , \nabla^{v} f  \rangle \right )  \\ 
&\quad +\frac{ \kappa}{2} \left ( \langle R(\nabla^{v} f, e^{0}) e^{0},\nabla^{v} f  \rangle - \Vert   \nabla^{\tilde{h}} f \Vert^2   \right ) \\ 
\Gamma^{\tilde{h}}_2 (f) &= \frac{ \sigma^{2}}{2} \left (  \Vert \mathrm{Hess}^{v, \tilde{h}} f \Vert^2 + \Vert \nabla^{\tilde{h}} f \Vert^2 - 2\langle \mathrm{Hess}^{v,\xi}(f) , \nabla^{\tilde{h}} f  \rangle   \right ) + \kappa  \langle R(\nabla^{v} f, e^{0}) e^{0},\nabla^{\tilde{h}} f  \rangle \\
\Gamma^{\xi}_2 (f) &= \frac{\sigma^2}{2} \left ( \Vert \mathrm{Hess}^{v,\xi} \Vert^2 + (n-1) \Vert \nabla^{\xi} f \Vert^2 + 2 \mathrm{Tr}(\mathrm{Hess}^{v,\tilde{h}}) \xi(f)\right ) 
\end{align*}
\end{Lem}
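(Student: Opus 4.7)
The plan is to unfold, for each of the four quadratic forms $\Gamma^v$, $\Gamma^{v,\tilde h}$, $\Gamma^{\tilde h}$, $\Gamma^\xi$, the Bakry identity
\[
\Gamma_2(f) = \tfrac{1}{2}\mathcal{L}\Gamma(f,f) - \Gamma(\mathcal{L}f, f),
\]
working at the level of the horizontal lift $\tilde{\mathcal{L}} = \frac{\sigma^2}{2}\sum_{i=1}^{n-1} V_i^2 + \kappa H_0$ on $O\mathcal{M}$, so that the only ingredients needed are the three bracket relations $[V_i, H_0] = H_i$, $[V_j, H_i] = -\delta_{ij} H_0$, $[H_0, H_i] = \sum_j \langle R(e^i,e^0)e^0, e^j\rangle V_j$, together with the remark that $[V_i,V_j]$ applied to any lifted function $g\circ\pi$ vanishes identically (since such functions depend only on $(x,e^0)$). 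I would then split each $\Gamma_2$ linearly into a $\sigma^2$-piece and a $\kappa$-piece and treat them separately.

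For the $\sigma^2$-piece I would use the standard sum-of-squares identity: if one writes $\Gamma(f) = \sum_\alpha (A_\alpha f)^2$ for vector fields $A_\alpha \in \{V_i, H_i, H_0\}$, then
\[
\tfrac{1}{2}\bigl(\Delta^v \Gamma(f) - 2\Gamma(\Delta^v f, f)\bigr) = \sum_{i,\alpha} (V_i A_\alpha f)^2 + \sum_{i,\alpha} (A_\alpha f)\,[V_i, [V_i, A_\alpha]] f.
\]
The first sum gives the Hessian-squared contribution of the appropriate type ($\mathrm{Hess}^v$, $\mathrm{Hess}^{v,\tilde h}$, or $\mathrm{Hess}^{v,\xi}$). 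The double bracket in the second sum is explicit case by case: for $A_\alpha = V_j$ the vanishing of $[V_i, V_j]$ on lifted functions combined with the Bochner identity on $S^{n-1}$ produces the $(n-2)\|\nabla^v f\|^2$ term; for $A_\alpha = H_j$ iterated use of $[V_i, H_j] = -\delta_{ij} H_0$ yields both a $\|\nabla^{\tilde h} f\|^2$ term and the $\mathrm{Hess}^{v,\xi}$ cross-term; for $A_\alpha = H_0$ one analogously produces $(n-1)\|\nabla^\xi f\|^2$ and the mixed trace $\mathrm{Tr}(\mathrm{Hess}^{v,\tilde h})\xi(f)$.

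For the $\kappa$-piece I would use the first-order commutator identity
\[
\tfrac{1}{2}\bigl(H_0 \Gamma(f) - 2\Gamma(H_0 f, f)\bigr) = \sum_\alpha (A_\alpha f)\,[H_0, A_\alpha] f.
\]
With $A_\alpha = V_i$ the bracket $[H_0,V_i] = -H_i$ contributes the term proportional to $\langle \nabla^v f, \nabla^{\tilde h} f\rangle$ in $\Gamma^v_2$ and its half-sized counterpart in $\Gamma^{v,\tilde h}_2$; with $A_\alpha = H_i$ the expansion of $[H_0, H_i]$ produces both the $\|\nabla^{\tilde h} f\|^2$ contribution and the Riemannian curvature term $\langle R(\nabla^v f, e^0)e^0, \cdot\rangle$; with $A_\alpha = H_0$ the bracket is zero.

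The main obstacle is bookkeeping rather than conceptual: the mixed formula for $\Gamma^{v,\tilde h}_2$ forces all three brackets to interact simultaneously, and some third-order derivative terms of the form $V_iV_jH_k f$ versus $V_iH_kV_j f$ must be reorganized using the brackets before the asserted combination of Hessian cross-products, dimensional factor $\tfrac{n-1}{2}$, and curvature term emerges cleanly. I would fix once and for all an ordering convention (say, all $V$'s to the right of all $H$'s), normalize every expression to that canonical form, and then collect the resulting monomials to read off the four stated identities.
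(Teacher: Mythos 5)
Your overall plan is the same as the paper's: expand each $\Gamma_2$ at the level of the frame bundle, split linearly into a $\sigma^2$-piece coming from $\Delta^v=\sum_iV_i^2$ and a $\kappa$-piece coming from $H_0$, and reduce everything to the three stated brackets together with the vanishing of $[V_i,V_j]$ on lifted functions. The $\kappa$-piece identity you write,
\[
\tfrac12\bigl(H_0\Gamma(f)-2\Gamma(H_0f,f)\bigr)=\sum_\alpha(A_\alpha f)\,[H_0,A_\alpha]f,
\]
is correct, and your treatment of that part is fine.

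There is, however, a genuine error in the key identity you write for the $\sigma^2$-piece. You state
\[
\tfrac12\bigl(\Delta^v\Gamma(f)-2\Gamma(\Delta^vf,f)\bigr)=\sum_{i,\alpha}(V_iA_\alpha f)^2+\sum_{i,\alpha}(A_\alpha f)\,[V_i,[V_i,A_\alpha]]f,
\]
but the correct computation gives
\[
\tfrac12\bigl(\Delta^v\Gamma(f)-2\Gamma(\Delta^vf,f)\bigr)=\sum_{i,\alpha}(V_iA_\alpha f)^2+\sum_{i,\alpha}(A_\alpha f)\,[V_i^2,A_\alpha]f,
\]
with $[V_i^2,A_\alpha]=2V_i[V_i,A_\alpha]-[V_i,[V_i,A_\alpha]]$. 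Your version drops the single-commutator contribution $2V_i[V_i,A_\alpha]$ and, moreover, carries the wrong sign on the double bracket. The dropped term is harmless only in the purely vertical case $A_\alpha=V_j$, where $V_i[V_i,V_j]$ annihilates lifted functions; in the cases $A_\alpha=H_j$ and $A_\alpha=H_0$ it is essential. Indeed $2V_i[V_i,H_j]=-2\delta_{ij}V_iH_0$ is precisely what generates the Hessian cross-term $-2\langle\mathrm{Hess}^{v,\xi}(f),\nabla^{\tilde h}f\rangle$ in $\Gamma^{\tilde h}_2$, and $2V_i[V_i,H_0]=2V_iH_i$ generates $2\,\mathrm{Tr}(\mathrm{Hess}^{v,\tilde h})\,\xi(f)$ in $\Gamma^\xi_2$. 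Your identity, being built only from double brackets, produces exclusively first-order terms $(A_\alpha f)\cdot(Bf)$ and therefore cannot manufacture any second-derivative cross-term, so your claim that it ``yields both a $\|\nabla^{\tilde h}f\|^2$ term and the $\mathrm{Hess}^{v,\xi}$ cross-term'' is inconsistent with the formula you actually wrote. The sign error also matters: with your sign, $\sum_i[V_i,[V_i,H_j]]=-H_j$ would yield $-\|\nabla^{\tilde h}f\|^2$ instead of the asserted $+\|\nabla^{\tilde h}f\|^2$. The fix is mechanical: keep the full commutator $[V_i^2,A_\alpha]=\delta_{ij}(H_i-2V_iH_0)$ for $A_\alpha=H_j$, and $[V_i^2,H_0]=2V_iH_i+H_0$ for $A_\alpha=H_0$, which is exactly what the paper does.
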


\begin{proof}
Instead of giving a proof of the computations for any $\Gamma_2$ considered, we first check in all cases the terms involving the vertical Laplacian (i.e the terms appearing beyond $\frac{\sigma^2}{2}$) and we check, in all cases again, the terms involving the vector field $\xi$ (i.e the terms appearing beyond $\kappa$).

Remark that if $\Op$ is a square of some vector field, says $\Op(f):= X^2(f) $ and $\Gamma$ is of the form $\Gamma_f = Y(f) \times Z(f)$ then 
\[
\Gamma_2 (f)  = \frac{1}{2} \left ( [X^2, Y](f) \times Z(f) + Y(f) \times [X^2, Z](f) + 2XY(f) \times XZ(f)  \right ).
\]
in particular if $Y=Z$ one gets, 
\[
\Gamma_2 (f) = XY(f) \times XY(f) + [X^2, Y ](f) \times Y(f).
\]
So in the computation of the $\Gamma_2$, the vertical Laplacian $\sum_i V_i^2 $ in $\Op$ gives 
\begin{itemize}
\item In the computation of $\Gamma^{v}_2$, $\sum_{i,j} V_i V_j (f)  \times V_i V_j (f)+ [V_i^2, V_j] (f)\times V_j(f)$. But $[V_i^2, V_j] = [V_i,V_j]V_i+ V_i[V_i,V_j]= -[V_i,[V_i,V_j] ] + 2V_i[V_i, V_j]$ and since $-[V_i,[V_i,V_j] ] =\mathbf{1}_{i\neq j}V_j$ and $V_i[V_i, V_j] (f\circ \pi) =0 $ on gets summing on $i,j=1,\dots, n-1$
\[
\Vert \mathrm{Hess}^v (f)  \Vert^2 + (n-2) \Vert \nabla^{v} f  \Vert^2.
\]
\item In the computation of $\Gamma^{v,\tilde{h}}_2$,  $\sum_{i,j} V_i V_j(f) \times V_i H_j(f)  + \frac{1}{2} [V_i^2, V_j](f) \times H_j(f)+ \frac{1}{2} V_j(f) \times [V_i^2, H_j](f)$. Here again $[V_i^2, V_j] (f\circ \pi)= \mathbf{1}_{i\neq j}V_j(f\circ\pi)$ and moreover $[V_i^2, H_j] = [V_i, H_j] V_i + V_i [V_i, H_j] =\delta_{ij}([V_i,H_0] - 2 V_iH_0) =  \delta_{ij}(H_i- 2 V_iH_0)$. So summing on $i,j$ one gets
\[
\langle \mathrm{Hess}^v f , \mathrm{Hess}^{v,\tilde{h}} f    \rangle  + \frac{n-2}{2} \langle  \nabla^{v} f, \nabla^{\tilde{h}} f \rangle + \frac{1}{2}  \langle  \nabla^{v} f, \nabla^{\tilde{h}} f \rangle  -\langle \mathrm{Hess}^{v,\xi}(f) , \nabla^{v} f  \rangle.
\]
\item In the computation of $\Gamma^{\tilde{h}}_2$, $\sum_{i,j} V_i H_j (f)\times V_i H_j(f) + [V_i^2, H_j](f) \times H_j(f)$. With $[V_i^2, H_j] = \delta_{ij}(H_i - 2 V_iH_0)$ on gets directly, summing on $i,j$
\[
 \Vert \mathrm{Hess}^{v, \tilde{h}} f \Vert^2 + \Vert \nabla^{\tilde{h}} f \Vert^2 - 2\langle \mathrm{Hess}^{v,\xi}(f) , \nabla^{\tilde{h}} f  \rangle 
\]
\item In the computation of $\Gamma^{\xi}_2$,  $\sum_{i} V_i H_0(f) \times V_i H_0 (f)+ [V_i^2, H_0](f) \times H_0(f)$. And since $[V_i^2, H_0] = [V_i, H_0] V_i + V_i [V_i, H_0] = H_i V_i + V_i H_i = 2 V_i H_i + [H_i,V_i] = 2 V_i H_i + H_0$. Then summing on $i,j$ one obtains (since $H_0 (f \circ \pi)= \xi (f) \circ \pi$)
\[
\Vert \mathrm{Hess}^{v,\xi} \Vert^2 + (n-1) \Vert \nabla^{\xi} f \Vert^2 + 2 \mathrm{Tr}(\mathrm{Hess}^{v,\tilde{h}}) \xi(f)\circ \pi
\]
\end{itemize}
Moreover if  $\Op$ consists of a vector field, $\Op(f):= X(f)$ and $\Gamma$ is of the form $\Gamma(f) =  Y(f) \times Z(f)$ then
\[
\Gamma_2(f) = \frac{1}{2} \left ( [X,Y](f) \times Z(f) + Y(f) \times [X,Z](f) \right ).
\]
Recalling that $[H_0, V_i]= -H_i$, $[H_0, H_j]= \sum_{i=1}^{n-1}\langle R(e^j,e^0)e^0, e^i \rangle V_i$ one gets immediately the $\Gamma_2$ for the $\xi$ part of $\Op$.
\end{proof}

 \subsection{A metric on $T^1\mathcal{M}$ with a a generalized Bakry-\'Emery estimate}
 In this section one proves that one can find a metric on $T^1 \mathcal{M}$ for which there is a generalized Bakry-\'Emery estimate. Our main assumption to obtain this result is 
 
 \vspace{0,5cm}
\noindent \textbf{Assumption A}: We suppose that the Riemannian tensor $R$ of $\mathcal{M}$ is bounded.

\

Using the expression of the different $\Gamma$ obtained in Lemma \ref{gamma2calc} and using extensively the Young inequality one gets the following Proposition:

\begin{Prop}\label{prop.ineq}
There exist $a,b,c,d >0$ with $b^2 < ac$ and $K \in \R$ such that 
\begin{equation}
a \Gamma^v_2(f) -2 b \Gamma^{v,\tilde{h}}_2(f) + c \Gamma^{\tilde{h}}_2(f) +d \Gamma^{\xi}_2(f)  \geq -K \Vert \nabla^{v} f \Vert^2 + \Vert \nabla^{h} f \Vert^2. \label{ineq1}
\end{equation}
\end{Prop}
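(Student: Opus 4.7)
The plan is to substitute the explicit expressions for $\Gamma^v_2,\Gamma^{v,\tilde h}_2,\Gamma^{\tilde h}_2,\Gamma^\xi_2$ from Lemma \ref{gamma2calc} into the left-hand side, group the terms according to their type (pure Hessian squares, mixed Hessian$\times$Hessian, Hessian$\times$gradient, pure gradient products, and curvature terms), and then absorb every ``bad'' term using Young's inequality and Assumption A. The structural observation is that after substitution the linear combination produces the following \emph{positive} building blocks:
\begin{align*}
&\tfrac{\sigma^2}{2}\bigl(a\|\mathrm{Hess}^v f\|^2+c\|\mathrm{Hess}^{v,\tilde h}f\|^2+d\|\mathrm{Hess}^{v,\xi}f\|^2\bigr),\\
&\tfrac{c\sigma^2}{2}\|\nabla^{\tilde h}f\|^2+b\kappa\|\nabla^{\tilde h}f\|^2+\tfrac{d\sigma^2}{2}(n-1)\|\nabla^\xi f\|^2,
\end{align*}
together with terms that are either pure $\|\nabla^v f\|^2$ (absorbable in $-K\|\nabla^v f\|^2$) or products that mix these quantities.

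The first step is to control the pure Hessian cross term $-b\sigma^2\langle\mathrm{Hess}^v f,\mathrm{Hess}^{v,\tilde h}f\rangle$. Cauchy--Schwarz bounds it by $b\sigma^2\|\mathrm{Hess}^vf\|\cdot\|\mathrm{Hess}^{v,\tilde h}f\|$; under the hypothesis $b^2<ac$ one may pick $\eta>0$ with $b^2<\eta^2 ac$, and the AM--GM inequality then absorbs this term into $\tfrac{\sigma^2}{2}(a\|\mathrm{Hess}^vf\|^2+c\|\mathrm{Hess}^{v,\tilde h}f\|^2)$, leaving a strictly positive remainder of those two Hessian norms. The remaining mixed Hessian--gradient terms, namely $b\sigma^2\langle\mathrm{Hess}^{v,\xi}f,\nabla^v f\rangle$, $-c\sigma^2\langle\mathrm{Hess}^{v,\xi}f,\nabla^{\tilde h}f\rangle$ and $d\sigma^2\,\mathrm{Tr}(\mathrm{Hess}^{v,\tilde h}f)\xi(f)$, are each split by Young's inequality $2xy\le \varepsilon x^2+\varepsilon^{-1}y^2$, the Hessian factor being absorbed in the leftover of the Hessian squares above, and the gradient factor contributing either to $-K\|\nabla^vf\|^2$, to a small $\varepsilon'\|\nabla^{\tilde h}f\|^2$, or to a small $\varepsilon''\|\nabla^\xi f\|^2$.

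Next, the pure gradient cross term $(a\kappa-\tfrac{(n-1)b\sigma^2}{2})\langle\nabla^vf,\nabla^{\tilde h}f\rangle$ is Young-split as $\varepsilon\|\nabla^{\tilde h}f\|^2+C_\varepsilon\|\nabla^vf\|^2$. Using Assumption A, i.e. $\|R\|\le R_0$, the two curvature terms $-b\kappa\langle R(\nabla^v f,e^0)e^0,\nabla^v f\rangle$ and $c\kappa\langle R(\nabla^v f,e^0)e^0,\nabla^{\tilde h}f\rangle$ are dominated in absolute value by $bR_0\kappa\|\nabla^vf\|^2$ and $cR_0\kappa\|\nabla^vf\|\|\nabla^{\tilde h}f\|$, and the latter is again Young-split into $\varepsilon\|\nabla^{\tilde h}f\|^2+C'_\varepsilon\|\nabla^vf\|^2$. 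Finally the harmless lower-order piece $\tfrac{a\sigma^2(n-2)}{2}\|\nabla^vf\|^2$ goes straight into the $-K\|\nabla^vf\|^2$ bookkeeping (with the correct sign).

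To finish, one chooses the parameters so that the residual coefficient in front of $\|\nabla^{\tilde h}f\|^2$, which equals
\[
\tfrac{c\sigma^2}{2}+b\kappa-O(\varepsilon),
\]
and the residual coefficient in front of $\|\nabla^\xi f\|^2$, which equals $\tfrac{d\sigma^2(n-1)}{2}-O(\varepsilon')$, are both at least $1$; this is easy since $b,c,d$ are free up to $b^2<ac$: take first $c,d$ large enough to dominate $\sigma^{-2}$, then $a$ large enough to allow $b$ with $b\kappa\ge 1$ while keeping $b^2<ac$. All remaining negative contributions are proportional to $\|\nabla^v f\|^2$ with explicit constants, and their sum defines $K$. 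The main obstacle in this plan is purely \emph{bookkeeping}: one must simultaneously satisfy the constraint $b^2<ac$ (needed to dissipate the non-signed term $\langle\mathrm{Hess}^v,\mathrm{Hess}^{v,\tilde h}\rangle$) and produce an overall coefficient $\ge 1$ in front of $\|\nabla^h f\|^2=\|\nabla^{\tilde h}f\|^2+\|\nabla^\xi f\|^2$, after several nested Young splittings whose parameters $\varepsilon,\varepsilon',\varepsilon'',\eta$ must be tuned consistently -- this is where the boundedness of $R$ enters decisively, since without it the curvature cross terms could not be absorbed into a finite multiple of $\|\nabla^v f\|^2$.
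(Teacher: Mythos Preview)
Your overall strategy is exactly the paper's: substitute the expressions from Lemma~\ref{gamma2calc}, use Young's inequality term by term, and solve for $a,b,c,d$. The gap is in your final parameter selection, where the bookkeeping you flag as ``the main obstacle'' is not actually carried out correctly.

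The issue is your claim that the gradient costs from the Young splittings are ``small $\varepsilon'\|\nabla^{\tilde h}f\|^2$'' and ``small $\varepsilon''\|\nabla^\xi f\|^2$''. They are not small: the Young parameters cannot be chosen freely because they are constrained by the Hessian budget, which is fixed once $a,c,d$ are. Concretely, absorbing $-c\sigma^2\langle\mathrm{Hess}^{v,\xi}f,\nabla^{\tilde h}f\rangle$ into the available $\tfrac{d\sigma^2}{2}\|\mathrm{Hess}^{v,\xi}f\|^2$ forces a cost of order $\tfrac{c^2\sigma^2}{d}\|\nabla^{\tilde h}f\|^2$; and absorbing $d\sigma^2\,\mathrm{Tr}(\mathrm{Hess}^{v,\tilde h}f)\,\xi(f)$ into the leftover $(1-\eta)\tfrac{c\sigma^2}{2}\|\mathrm{Hess}^{v,\tilde h}f\|^2$ forces a cost of order $\tfrac{d^2(n-1)\sigma^2}{(1-\eta)c}\|\nabla^\xi f\|^2$. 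For the $\xi$-coefficient to stay positive you therefore need $d<(1-\eta)c$, and once $d<c$ the cost $\tfrac{c^2\sigma^2}{d}$ exceeds $\tfrac{c\sigma^2}{2}$, so the $\tilde h$-coefficient can only be saved by $b\kappa$. Your prescription ``take $c,d$ large, then $a$ large enough to allow $b$ with $b\kappa\ge 1$'' ignores both the coupling $d<c$ and the fact that one needs $b\kappa$ large compared to $c^2\sigma^2/d$, not merely $b\kappa\ge 1$. The paper resolves this by fixing $d=c/(1+\varepsilon)$ first, then choosing $c$ proportional to $b\kappa/\sigma^2$ (so that $b\kappa$ dominates all $\tilde h$-costs), and only then $a=b^2/(c\varepsilon)$; this hierarchy is what makes the system of inequalities close.
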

\begin{proof}
By assumption A, the quantity 
\[
M:= \sup_{x \in \mathcal{M}} \sup_{u,v,w,h \in T^1 \mathcal{M}}  \vert \langle u, R(v,w)h \rangle_{x} \vert,
\]
is finite. 

For any $\ep_1, \dots, \ep_6 >0$ one has, by Young inequality
\begin{align*}
\Gamma^{v}_2 (f) &\geq \frac{\sigma^2}{2} \left (  \Vert \mathrm{Hess}^v (f)  \Vert^2 + (n-2) \Vert \nabla^{v} f  \Vert^2 \right ) -\frac{\kappa}{2} \left ( \frac{1}{\ep_1} \Vert \nabla^{v} f  \Vert^2  + \ep_1 \Vert \nabla^{\tilde{h}} f  \Vert^2   \right ) \\ 
\Gamma^{v,\tilde{h}}_2 (f) &\leq  \frac{\sigma^2}{2} \left ( \langle \mathrm{Hess}^v f , \mathrm{Hess}^{v,\tilde{h}} f    \rangle  + \frac{n-1}{4} \left ( \frac{1}{\ep_2} \Vert \nabla^{v} f  \Vert^2  + \ep_2 \Vert \nabla^{\tilde{h}} f  \Vert^2   \right )   + \frac{1}{2} \left (  \frac{1}{\ep_3} \Vert \nabla^{v} f  \Vert^2  + \ep_3 \Vert \mathrm{Hess}^{v,\xi}(f) \Vert^2 \right) \right )  \\ 
&\quad +\frac{ \kappa}{2} \left ( M \Vert \nabla^{v} f  \Vert^2 - \Vert   \nabla^{\tilde{h}} f \Vert^2   \right ) \\ 
\Gamma^{\tilde{h}}_2 (f) &\geq \frac{ \sigma^{2}}{2} \left (  \Vert \mathrm{Hess}^{v, \tilde{h}} f \Vert^2 + \Vert \nabla^{\tilde{h}} f \Vert^2 - \left (\frac{1}{\ep_4} \Vert \mathrm{Hess}^{v,\xi}(f)  \Vert^2  + \ep_4 \Vert \nabla^{\tilde{h}} f \Vert^2   \right )  \right ) - \frac{\kappa M}{2} \left ( \frac{1}{\ep_5}\Vert \nabla^{v} f \Vert^2 + \ep_5 \Vert \nabla^{\tilde{h}} f  \Vert^2 \right) \\
\Gamma^{\xi}_2 (f) &\geq  \frac{\sigma^2}{2} \left ( \Vert \mathrm{Hess}^{v,\xi} \Vert^2 + (n-1) \Vert \nabla^{\xi} f \Vert^2 - \left ( \frac{n-1}{\ep_6} \Vert \mathrm{Hess}^{v,\tilde{h}} \Vert^2  + \ep_6 \Vert \nabla^{\xi} f  \Vert^2 \right)  \right ) 
\end{align*}
So for $a,b,c,d>0$ one has
\begin{align*}
a \Gamma^v_2(f) -2 b \Gamma^{v,\tilde{h}}_2(f) + c \Gamma^{\tilde{h}}_2(f) +d \Gamma^{\xi}_2(f) &\geq C^v \Vert \nabla^v f\Vert^2 + C^{\tilde{h}} \Vert \nabla^{\tilde{h}}  f  \Vert^2 + C^{\xi} \Vert \nabla^{\xi}  f  \Vert^2 \\
&\quad + A\Vert  \mathrm{Hess}^v (f)  \Vert^2 -2B\langle \mathrm{Hess}^v f , \mathrm{Hess}^{v,\tilde{h}} f    \rangle + C\Vert \mathrm{Hess}^{v, \tilde{h}} f \Vert^2  \\
 &\quad  + D \Vert \mathrm{Hess}^{v,\xi} f \Vert^2
\end{align*}
where
\begin{align*}
C^v &= a \left (\frac{\sigma^2}{2}(n-2) - \frac{\kappa}{2\ep_1} \right) -2b\left (\frac{\sigma^2}{2}\left ( \frac{(n-1)}{4\ep_2}  + \frac{1}{2\ep_3}   \right)  + \frac{\kappa M}{2} \right ) - \frac{c \kappa M}{2 \ep_5} \\
C^{\tilde{h}} &= -a\frac{\kappa}{2} \ep_1 + 2b \left ( \frac{\kappa}{2} -\frac{\sigma^2(n-1)}{8} \ep_2 \right) -c\left (-1+ \frac{\sigma^2}{2} \ep_4 + \frac{\kappa M}{2} \ep_5 \right) \\
C^{\xi} &= d\frac{\sigma^2}{2} \left ( n-1 -\ep_6  \right) \\
A&= \frac{\sigma^2}{2}a \\
B&= \frac{\sigma^2}{2}b \\
C&=\frac{\sigma^2}{2} \left ( c - \frac{n-1}{\ep_6}d  \right)  \\
D&= \frac{\sigma^2}{2} \left ( d- \frac{c}{\ep_4} - b\ep_3 \right)
\end{align*}
We want to choose $a,b,c,d>0$ such that
\[
b^2 < ac, \quad A,B,C,D\geq0, \quad B^2 \leq AC, \quad  C^{\tilde{h}}>0, \ \mathrm{and} \ C^{\xi}>0.
\]
Condition $C\geq 0$ and $C^{\tilde{h}} >0$ implies that necessary one has to choose $d<c$. So fix $\ep>0$ and choose $d= \frac{c}{1+\ep}$ ($c$ will be chosen later). Choose also $\ep_6= (n-1) \frac{2+\ep}{2(1+\ep)}$ and so with this choice: 
\[
C^{\xi}= c \frac{\sigma^2}{2}(n-1) \frac{\ep}{1+\ep}, \quad C= c\frac{\sigma^2}{2} \ep. 
\]
Fix $\ep'>0$ and choose $\ep_3$ such that $b\ep_3 = \frac{c}{\ep_4} \ep'$ so that 
\[
D = \frac{d\sigma^2}{2} \left ( 1- \frac{(1+\ep')(1+\ep)}{\ep_4} \right) 
\]
and fix $\ep_4= (1+\ep')(1+\ep)$ so that $D=0$. In order to choose $C^{\tilde{h}}>0$ we need to impose that $c\frac{\sigma^2}{2}\ep_4$ is small comparing to $b\kappa$. So fix $\ep''>0$ and take $c$ such that $c\frac{\sigma^2}{2}\ep_4 = b \kappa \ep''$ so 
\[
c= b \frac{2\kappa}{\sigma^2} \frac{\ep''}{(1+\ep)(1+\ep')}.
\]
Choose now $\ep_5,\ep_2$ and $\ep_1$ such that $ c \frac{\kappa M}{2} \ep_5 = b \kappa \ep''$, $\frac{\sigma^2(n-1)}{4}\ep_2 = \kappa \ep''$ and $\frac{a\kappa}{2} \ep_1 = b\kappa \ep''$. Thus 
\[
C^{\tilde{h}}= b \kappa \left (1 - 4\ep'' +\frac{2 \ep''}{\sigma^2(1+\ep)(1+\ep')} \right)
\]
and the condition $C^{\tilde{h}} >0$ is satisfied if, for example, $\ep'' < \frac{1}{4}$.
Moreover, the condition $B^2 \leq AC$ is equivalent to $b^2 \leq ac\ep$ (which implies, if $\ep<1$, $b^2<ac$). Thus assume $\ep<1$ and choose $a= \frac{b^2}{c\ep}$ thus
\[
a= b \frac{\sigma^2 (1+\ep)(1+\ep')}{2 \kappa \ep \ep'' }.
\]
Let fix now $\ep''$ such that $C^{\xi}=C^{\tilde{h}}$, for that take 
\[
\ep'' = \frac{1}{4} \times \frac{1}{1+ \frac{(n-1)\ep}{4(1+\ep')}}.
\]
Finally fix $b$ such that $C^{h}=C^{\xi}=1$ and one obtains 
\[
b= \frac{1}{\kappa} \left ( 1+ \frac{4(1+\ep')}{(n-1)\ep}  \right ). 
\]
To sum up, one can describe all the obtained parameters $a,b,c$ and $d$ by the mean of two parameters $\ep \in ]0,1[$ and $\ep'>0$
\begin{align*}
a&= \frac{\sigma^2}{2 \kappa^2} (n-1) \times \left ( 1+ \frac{4(1+\ep')}{(n-1)\ep} \right )^2 \times (1+\ep) \\
b&= \frac{1}{\kappa} \left ( 1+ \frac{4(1+\ep')}{(n-1)\ep} \right ) \\
c&= \frac{2}{\sigma^2} \times \frac{1}{n-1} \times  \frac{\ep}{1+\ep} \\
d&=  \frac{2}{\sigma^2} \times \frac{1}{n-1} \times  \frac{\ep}{(1+\ep)^2}.
\end{align*}
Finally the expression (quite complicated) of $C^{v}$ can be also given in terms of $\ep$ and $\ep'$. We just gives here the equivalent when $\kappa=\sigma$ and goes to infinity
\[
C^{v} \underset{\sigma \to +\infty}{\sim} \frac{\sigma^2}{2} \times K_{\ep,\ep'}
\]
where $K_{\ep,\ep'}$ is the constant 
\[
\frac{n-1}{2} \left ( 1+ \frac{4(1+\ep')}{(n-1)\ep} \right )^2  (1+\ep)  \left (  n-2 - \left (4+ \frac{\ep^2}{\ep'} \right ) \frac{(1+\ep)(1+\ep')}{\ep} - \frac{(n-1)^2}{16} \times \frac{\ep}{(1+\ep)(1+\ep')}\right  )
\]
\end{proof}

Define now the tensor $\mathcal{T}$ and its iterated $\mathcal{T}_2$ as 
\begin{align*}
\mathcal{T}(f) &:= a \Vert \nabla^v f \Vert^2 -2b \langle \nabla^v f, \nabla^{\tilde{h}} f   \rangle + c \Vert \nabla^{\tilde{h}}f \Vert^2 + d \Vert \nabla^\xi f  \Vert^2\\
\mathcal{T}_2(f)&:=\frac{1}{2} \left ( \Op ( \mathcal{T}(f) ) -2 \mathcal{T}(f, \Op f)  \right ) \\ 
&= a \Gamma^v_2(f) -2 b \Gamma^{v,\tilde{h}}_2(f) + c \Gamma^{\tilde{h}}_2(f) +d \Gamma^{\xi}_2(f).
\end{align*}
where the coefficient $a,b,c$ and $d$ are the one of the previous Proposition. 

Note that the \emph{carr\'e du champ} operator $\Gamma$ associated to  $\Op$ is 
\begin{align*}
\Gamma (f) &= \frac{\sigma^2}{2} \Gamma^{v}(f).
\end{align*}

Moreover, since $\mathcal{T}(f) \leq (a+ b) \Vert \nabla^v f \Vert^2  + \max(b+c, d) \Vert \nabla^h f \Vert^2$, inequality \eqref{ineq1} of the previous Proposition gives the following generalized Bakry-\'Emery estimate for $\Op$: 
\begin{align}
\mathcal{T}_2 (f) \geq \rho \mathcal{T}(f) - K \Gamma(f) \label{ineq2}
\end{align}
where
\[
\rho = \frac{1}{\max(b+c, d)}, \quad \mathrm{and} \quad  K= \left  (-C^v + \frac{a+b}{\max(b+ c, d)} \right ) \frac{2}{\sigma^2}.
\]
\begin{Remark}
 Observe that when $\kappa = \sigma$ and $k,\sigma$ both go to infinity we have 
 \begin{equation}
K \underset{\sigma \to +\infty}{\longrightarrow} -K_{\ep,\ep'}. \label{Kasympt}
\end{equation}
where $K_{\ep, \ep'}$ is defined at the end of Proposition  \ref{prop.ineq}. 

\end{Remark}

\subsection{Convergence to equilibrium in $H^1$ }

Denote by $\mu$ the volume on $T^1\mathcal{M}$ for the Sasaki metric. For $f: T^1 \mathcal{M} \to \R$, bounded one has: 
\[
\int_{T^1 \mathcal{M}} f d \mu = \int_{\mathcal{M}} \left ( \int_{T^1_x \mathcal{M} } f(x, v) \mathrm{Vol}_{T^1_x \mathcal{M}}(dv)    \right ) \mathrm{Vol}_{\mathcal{M}}(dx).
\]
One supposes that the volume $\mathrm{Vol}_{\mathcal{M}}$ is finite, and thus $\mu$ is finite too. We denote by $P_t$ the semigroup generated by $\Op$, that is
\[
\forall (x,v) \in T^1 \mathcal{M}, \quad P_t f (x) = \mathbb{E}_{(x,v)} [ f(x_t, v_t) ],
\]
where $(x_t,v_t)_{t \geq 0}$ is the velocity spherical Brownian motion, i.e. the process generated by $\mathcal{L}$. Since the manifold is assumed  to be complete, the lifetime of this process is infinite. Using the  generalized Bakry-\'Emery estimate \eqref{ineq2} one deduces  the following exponential convergence in the $H^1$ norm (as in \cite{baudoin-bakry}, \cite{BaudoinEMS}).

\begin{Prop}\label{PropH1}
Suppose that $\mu$ satisfies a Poincar\'e inequality (for the Sasaki metric)
\[
\int f^2 d\mu - \left ( \int f d\mu \right)^2   \leq  \frac{1}{\lambda} \int \Vert \nabla^v f \Vert^2 +  \Vert \nabla^{h} f \Vert^2d \mu 
\]
then one has the following exponential convergence to equilibrium in the  $H^1(\mu,\mathcal{T})$ norm 
\begin{align}
\Vert P_t f - \mu(f) \Vert^2_{H^1(\mu,\mathcal{T})} \leq e^{- 2\tilde{\lambda}t } \Vert f- \mu(f) \Vert^2_{H^1(\mu,\mathcal{T})}  \label{CVH1}
\end{align}
where 
\[
\Vert f \Vert^2_{H^1(\mu,\mathcal{T})} := \int f^2 + K \mathcal{T}(f) d \mu
\]
and $\tilde{\lambda}$ can be explicitly written in term of $\lambda$, $K$ and $\rho$.
\end{Prop}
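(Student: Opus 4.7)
The plan is to adapt the standard energy argument from Corollary \ref{poinca} (and \cite{baudoin-bakry,BaudoinEMS}) to the tensor $\mathcal{T}$ together with the generalized Bakry-\'Emery estimate \eqref{ineq2}. One can assume $\mu(f)=0$ throughout, since centering does not affect $\Gamma$, $\mathcal{T}$, $\mathcal{T}_2$, and $P_t(f-\mu(f))=P_tf-\mu(f)$, so it suffices to prove exponential decay of $\int (P_tf)^2\,d\mu + K\int\mathcal{T}(P_tf)\,d\mu$ (up to the equivalent norm discussion below).

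First I would differentiate the two pieces of the weighted $H^1$-norm. The invariance of $\mu$ (i.e. $\int \Op g\,d\mu=0$), combined with the identities $\Op(g^2)=2g\Op g+2\Gamma(g)$ and $\Op(\mathcal{T}(g))=2\mathcal{T}(g,\Op g)+2\mathcal{T}_2(g)$, gives
\begin{align*}
\frac{d}{dt}\int (P_tf)^2\,d\mu &= -2\int\Gamma(P_tf)\,d\mu, \\
\frac{d}{dt}\int\mathcal{T}(P_tf)\,d\mu &= -2\int\mathcal{T}_2(P_tf)\,d\mu.
\end{align*}
For positive weights $\alpha,\beta$ to be chosen below, set $\Phi(t):=\alpha\int(P_tf)^2\,d\mu+\beta\int\mathcal{T}(P_tf)\,d\mu$; inserting the bound \eqref{ineq2} into the second identity yields
\[
\Phi'(t)\le -2(\alpha-\beta K)\int\Gamma(P_tf)\,d\mu-2\beta\rho\int\mathcal{T}(P_tf)\,d\mu.
\]

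The crucial algebraic step is to pick the weights so that the $\Gamma$-term is annihilated, namely $\alpha=\beta K$, leaving $\Phi'(t)\le -2\beta\rho\int\mathcal{T}(P_tf)\,d\mu$. (Up to a relabelling of constants this $\Phi$ is equivalent to $\|\cdot\|^2_{H^1(\mu,\mathcal{T})}$ of the statement.) To upgrade this differential inequality into exponential decay of $\Phi$ itself, I would use that the coefficients $a,b,c,d$ provided by Proposition \ref{prop.ineq} satisfy $b^2<ac$ and $d>0$, so that $\mathcal{T}$ is a strictly positive-definite quadratic form on the orthogonal decomposition $\mathcal{V}\oplus\widetilde{\mathcal{H}}\oplus\mathrm{Vect}(\xi)$. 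Hence there exists $c_1>0$ with
\[
\mathcal{T}(g)\ge c_1\left(\|\nabla^{v}g\|^2+\|\nabla^{h}g\|^2\right)
\]
pointwise. Applying the Poincar\'e assumption to $P_tf$ (still $\mu$-centered) then gives $\int\mathcal{T}(P_tf)\,d\mu\ge c_1\lambda\int(P_tf)^2\,d\mu$, and a suitable convex combination with the trivial $\int\mathcal{T}(P_tf)\,d\mu\ge\int\mathcal{T}(P_tf)\,d\mu$ produces $\int\mathcal{T}(P_tf)\,d\mu\ge(\tilde\lambda_0/\beta)\,\Phi(t)$ for an explicit $\tilde\lambda_0>0$ depending only on $\lambda,K,\rho,c_1$. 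Substituting this back delivers $\Phi'(t)\le -2\tilde\lambda\,\Phi(t)$ with $\tilde\lambda:=\rho\tilde\lambda_0$, and a Gr\"onwall argument finishes.

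The main obstacle is purely of bookkeeping nature: producing a strictly positive $\tilde\lambda$ in closed form in terms of $\lambda,K,\rho$. What makes the bookkeeping go through is precisely the strict positive-definiteness of $\mathcal{T}$ (and hence the $b^2<ac$ condition built into Proposition \ref{prop.ineq}): it upgrades the Poincar\'e inequality, which a priori only controls $\int f^2\,d\mu$ by $\int(\|\nabla^{v}f\|^2+\|\nabla^{h}f\|^2)\,d\mu$, into a direct domination of $\int f^2\,d\mu$ by a multiple of $\int\mathcal{T}(f)\,d\mu$. Incidental technicalities---differentiating under the integral sign, applying the chain rule to $P_tf$ on bounded Lipschitz data, and commuting differentiation with the semigroup---are handled by completeness of $\mathcal{M}$, the bounded-curvature assumption, and the semigroup gradient bound of Proposition \ref{GB18}.
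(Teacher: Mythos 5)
Your proposal is correct and follows essentially the same route as the paper: annihilate the $\Gamma$-term by weighting the squared $L^2$-norm by $K$, then use positive-definiteness of $\mathcal{T}$ (from $b^2<ac$ and $d>0$) to convert the Poincar\'e inequality into a bound $\int\mathcal{T}(P_sf)\,d\mu\ge\hat\lambda\int(P_sf)^2\,d\mu$, split $\int\mathcal{T}$ by a convex combination, and apply Gr\"onwall. The only cosmetic difference is that the paper differentiates the interpolated pointwise functional $\Lambda_s=P_{t-s}(\mathcal{T}(P_sf))+K\,P_{t-s}((P_sf)^2)$ and then integrates against $\mu$, whereas you differentiate the integrated quantity directly using invariance of $\mu$; after integration these coincide.
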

\begin{proof}
We note  that the law of the velocity spherical Brownian motion $(x_t, v_t)$ starting at $(x_0,v_0)\in T^{1}\mathcal{M}$ is supported by the compact $\{ (x,v) \in T^{1}\mathcal{M}, \quad d(x,x_0) \leq t \}$. Thus if $f$ is smooth and compactly supported then so is $P_t f $. For that reason the following computations are well justified. 

Define  $\Lambda_s := P_{t-s}( \mathcal{T}(P_{s}f  ) ) + K P_{t-s} ( (P_{s} f)^2)$ which is compactly supported when $f$ is. Then, using inequality \eqref{ineq2},
\[
\Lambda'_s = -2 P_{t-s}(\mathcal{T}_2 (P_{s}f)) - 2 KP_{t-s} (\Gamma(P_s f)) \leq -2 \rho P_{t-s} (\mathcal{T}(P_s f)).
\]
By integrating with respect to $\mu$, one obtains for some $\eta >0$
\[
\int \Lambda'_s d\mu \leq -2 \rho \eta \left (  \int \mathcal{T}(P_s f) d\mu  + \frac{1-\eta}{\eta}  \int \mathcal{T}(P_s f) d\mu  \right  ).
\]
Now, since $\mathcal{T}(f) \geq  \min(a(1-\sqrt{\ep}), c(1-\sqrt{\ep}),d) \Vert \nabla f \Vert^2$, the Poincar\'e inequality can be written, for $f$ such that $\int f d \mu =0$,  
\[
\frac{1}{\hat{\lambda}} \int \mathcal{T}(P_s f) d\mu  \geq \int (P_s f)^2 d \mu
\]
where $\hat{\lambda} := \lambda \times \min(a(1-\sqrt{\ep}), c(1-\sqrt{\ep}),d)$.

Thus, by choosing $\eta$ such that $\frac{1-\eta}{\eta}  =\frac{K}{ \hat{\lambda}}$ i.e $\eta = \frac{\hat{\lambda}}{K + \hat{\lambda}}$ we have
\[
\int \Lambda'_s  d\mu \leq -2  \rho \eta \int \Lambda_s d\mu
\]
and by Gronwall inequality, 
\[
\int \Lambda_t d\mu \leq e^{-2 \rho \eta } \int \Lambda_0 d\mu,
\]
which is 
\[
\Vert P_t f  \Vert_{H^1} \leq e^{- 2\tilde{\lambda}t } \Vert f \Vert_{H^1}
\]
for $\tilde{\lambda} =  \rho \eta = \frac{\rho \hat{\lambda}}{K + \hat{\lambda}}$. 
\end{proof} 

\begin{Remark}
When $\kappa= \sigma$ and $\sigma$ goes to infinity one obtains (recall that $\rho := (\max(b+c,d))^{-1}$ and  $\hat{\lambda}:= \lambda  \times \min(a(1-\sqrt{\ep}), c(1-\sqrt{\ep}),d)$  ) that $\rho$ is of order $\sigma$ and $\hat{\lambda}$ of order $1/\sigma^2$. So the rate $\tilde{\lambda}$ obtained goes to zero when $\sigma$ goes to infinity.
\end{Remark}

\begin{Remark}
One can provide a sufficient condition on $\mathcal{M}$ only so that the Poincar\'e inequality on $T^1\mathcal{M}$ is satisfied. Indeed, assume that the Ricci curvature of $\mathcal{M}$ (for the Levi-Civita connection) is a Codazzi tensor, that is for any smooth vector fields $X,Y,Z$ on $\mathcal{M}$,
\[
(\nabla_X \text{Ric}) (Y, Z)=(\nabla_Y \text{Ric}) (X, Z).
\]
Then, the Bott connection associated to the totally geodesic Riemannian submersion $T^1\mathcal{M} \to \mathcal{M}$ is of Yang-Mills type (see Example 4.4 in \cite{BaudoinEMS}). Moreover, the boundedness of the Riemann curvature tensor implies the boundedness of torsion tensor of the Bott connection.  As a consequence, if the integrated Ricci curvature of $\mathcal{M}$ is bounded from below by a positive constant, one deduces that the horizontal Laplacian on $T^1\mathcal{M}$ satisfies an integrated generalized curvature-dimension inequality in the sense of \cite{BG}. As such, the horizontal Laplacian has a spectral gap (see \cite{BaudoinEMS}). Since the horizontal Dirichlet form is dominated by the total Dirichlet form on $T^1\mathcal{M}$, one concludes that the Poincar\'e inequality on $T^1\mathcal{M}$ is satisfied.
\end{Remark}

\subsection{Convergence to equilibrium in $L^2$}

In order to get the convergence in $L^2$ we need a regularization result which controls the gradient of $P_t f$ by the $L^2$ norm of $f$.  To get this inverse Poincar\'e type inequality we adapt H\'erau's method \cite{Herau} and look  now for   coefficients $a,b,c$ and $d$  as in Proposition \ref{prop.ineq}, but depending now on time $t$ such that a positive expression involving the horizontal and vertical gradients is decreasing with $t$.  

\subsubsection{$\Sigma$-calculus}

As observed by Gadat and Miclo in \cite{GM13}, even if in the literature about hypocoercivity, the authors consider mostly brackets of first order operators,  it seems that in some models, the keypoint is given by brackets between second order operators. In our case of the velocity spherical Brownian motion, the $3$-order Lie bracket $[V_i, [V_i, H_0]]$, necessary to reach the direction $H_0$ (or $\xi$), appears naturally in the bracket $[ \Delta^v, \xi]$ that we already met in the computation of the $\Gamma^{\xi}_2$ (it is exactly what gives the term $\Vert \nabla^{\xi} f \Vert^2$ in $\Gamma_2^\xi$). However it seems to be not enough to get the regularization result that we are looking for. Instead, we remark that the bracket $[ \Delta^v, \xi ]$ appears also when we consider the "$\Gamma_2$" of the square of the vertical Laplacian. This is quite close to some $\Gamma_3$ computation and makes appear $3$rd-order derivatives.

Let introduce $\Sigma^{v} (f) := \left ( \Delta^v f \right )^2$, $\Sigma^{v, \xi} (f) := \Delta^v f \times \xi(f)$ and 
\begin{align*}
\Sigma_2^v (f) &= \frac{1}{2} \left ( \Op(\Sigma^v (f )) - 2 \Sigma^v ( \Op f, f )   \right )    \\
\Sigma_2^{v,\xi} (f) &= \frac{1}{2} \left ( \Op (\Sigma^{v,\xi} (f) ) - \Sigma^{v,\xi} (\Op f, f ) - \Sigma^{v,\xi} ( f, \Op f )  \right ).
\end{align*}

A straightforward, if not tedious, computation yields

\begin{Lem} \label{sigma}
\begin{align*}
\Sigma_2^v (f) &= \frac{\sigma^2}{2}  \Vert  \nabla^v ( \Delta^v f )   \Vert^2 + \kappa  [ \xi, \Delta^v ] f \Delta^v f \\
 &= \frac{\sigma^2}{2}  \Vert  \nabla^v ( \Delta^v f )   \Vert^2 - (n-1)\kappa \Delta^v f \times \xi(f) - 2\kappa \Delta^v f \times \Delta^{v, \tilde{h}}f   \\
\Sigma_2^{v,\xi} (f) &= \frac{\sigma^2}{4} \left ( \Delta^v f \times [ \Delta^v, \xi ] f + 2\langle \nabla^v \Delta^v f , \mathrm{Hess}^{v,\xi} f\rangle   \right)  + \frac{\kappa}{2} [ \xi, \Delta^v ] f \xi(f) \\
&=  \frac{(n-1)\sigma^2}{4} \Delta^v f \times \xi(f) + \frac{\sigma^2}{2} \Delta^v f \times \Delta^{v, \tilde{h}}f  +    \frac{\sigma^2}{2}\langle \nabla^v \Delta^v f , \mathrm{Hess}^{v,\xi} f  \rangle \\
& \quad \quad - \frac{(n-1)\kappa}{2} \Vert \nabla^{\xi} f \Vert^2 - \kappa \Delta^{v, \tilde{h}}f \times \xi(f)
\end{align*}
\end{Lem}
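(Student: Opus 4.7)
The plan is to unfold each expression via its defining bilinear relation, exploiting the product rules governing how $\Op=\frac{\sigma^2}{2}\Delta^{v}+\kappa\xi$ acts on products of functions. Writing $\phi:=\Delta^{v}f$ and $\psi:=\xi(f)$, the two basic identities I will use are
\[
\tfrac{1}{2}\Delta^{v}(u^{2})=\Vert\nabla^{v}u\Vert^{2}+u\,\Delta^{v}u,\qquad \tfrac{1}{2}\Delta^{v}(uw)=\langle\nabla^{v}u,\nabla^{v}w\rangle+\tfrac{1}{2}(u\,\Delta^{v}w+w\,\Delta^{v}u),
\]
together with the ordinary Leibniz rule for the first-order field $\xi$. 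These are valid on functions on $T^{1}\mathcal{M}$ when $\Delta^{v}$ is lifted to $\sum_{i=1}^{n-1}V_{i}^{2}$, since the spherical fibers of the bundle $T^{1}\mathcal{M}\to\mathcal{M}$ are totally geodesic for the Sasaki metric and carry no zeroth-order correction.

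For $\Sigma_{2}^{v}(f)$, the first identity with $u=\phi$ gives $\tfrac{1}{2}\Op(\phi^{2})=\tfrac{\sigma^{2}}{2}\Vert\nabla^{v}\phi\Vert^{2}+\phi\,\Op\phi$, while by definition $\Sigma^{v}(\Op f,f)=\Delta^{v}(\Op f)\cdot\phi$. Subtracting and using that $\Delta^{v}$ commutes with itself leaves
\[
\Sigma_{2}^{v}(f)=\tfrac{\sigma^{2}}{2}\Vert\nabla^{v}\Delta^{v}f\Vert^{2}+\kappa\,\Delta^{v}f\cdot[\xi,\Delta^{v}]f,
\]
which is the first claimed expression. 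For $\Sigma_{2}^{v,\xi}(f)$ I apply the mixed product rule to $\phi\psi$, expand $\Delta^{v}(\Op f)$ and $\xi(\Op f)$ in $\Sigma^{v,\xi}(\Op f,f)+\Sigma^{v,\xi}(f,\Op f)$, and subtract. After cancellation, the non-gradient terms collapse into $\tfrac{\sigma^{2}}{4}\phi\,[\Delta^{v},\xi]f$ and $\tfrac{\kappa}{2}\psi\,[\xi,\Delta^{v}]f$, while the gradient term $\tfrac{\sigma^{2}}{2}\langle\nabla^{v}\phi,\nabla^{v}\psi\rangle$ rewrites as $\tfrac{\sigma^{2}}{2}\langle\nabla^{v}\Delta^{v}f,\mathrm{Hess}^{v,\xi}f\rangle$, because $V_{i}\xi(f)=V_{i}H_{0}(f\circ\pi)=\mathrm{Hess}_{i}^{v,\xi}(f)$. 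This gives the first equality for $\Sigma_{2}^{v,\xi}$.

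The one non-tautological ingredient is the explicit value of the commutator $[\xi,\Delta^{v}]$, which bridges the two equalities in the statement. I will compute it on $O\mathcal{M}$ using the bracket relations already recorded in Section 3.2. From $[H_{0},V_{i}]=-H_{i}$ and $[V_{j},H_{i}]=-\delta_{ij}H_{0}$, one finds
\[
[H_{0},V_{i}^{2}]=-H_{i}V_{i}-V_{i}H_{i}=-2V_{i}H_{i}-H_{0},
\]
after rewriting $H_{i}V_{i}=V_{i}H_{i}+H_{0}$. Summing over $i=1,\dots,n-1$ and projecting to $T^{1}\mathcal{M}$ gives
\[
[\xi,\Delta^{v}]f=-2\,\Delta^{v,\tilde{h}}f-(n-1)\,\xi(f),
\]
where $\Delta^{v,\tilde{h}}f:=\mathrm{Tr}(\mathrm{Hess}^{v,\tilde{h}}f)=\sum_{i}V_{i}H_{i}(f\circ\pi)$. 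Substituting this back into the two first-form expressions and gathering like terms produces the stated second formulas.

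I do not anticipate any genuine obstacle beyond careful bookkeeping; as the authors signal, the computation is straightforward though tedious. The only mildly subtle point is the symmetrization convention implicit in defining $\Sigma^{v,\xi}$ as a bilinear form, but since $\Sigma^{v,\xi}(\Op f,f)+\Sigma^{v,\xi}(f,\Op f)$ equals $\Delta^{v}(\Op f)\cdot\xi(f)+\Delta^{v}f\cdot\xi(\Op f)$ regardless of whether one symmetrizes, the final identities are insensitive to this choice.
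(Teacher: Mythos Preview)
Your proof is correct and is precisely the direct computation the paper alludes to without detailing. The paper offers no argument beyond ``a straightforward, if not tedious, computation yields'', and your expansion via the product rules for $\Delta^{v}$ and $\xi$, followed by the explicit evaluation of $[\xi,\Delta^{v}]=-2\Delta^{v,\tilde h}-(n-1)\xi$ from the bracket relations $[H_{0},V_{i}]=-H_{i}$ and $[V_{i},H_{i}]=-H_{0}$, is exactly the intended route.
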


\subsubsection{Regularization}
Let $f$ be a smooth function on $T^1 \mathcal{M}$ and $t>0$ fixed. For $s\in [0,t]$, denote by 
\begin{align*}
\Lambda(s) := P_{t-s} \left ( \left ( P_s f   \right )^2   \right ), \ \ \Gamma^{v} (s) := P_{t-s} &\left ( \Gamma^{v}(P_s f )   \right ), \ \ \Gamma^{v,\tilde{h}} (s) := P_{t-s} \left ( \Gamma^{v,\tilde{h}}(P_s f )   \right ), \\
 \Gamma^{\tilde{h}} (s) := P_{t-s} \left ( \Gamma^{\tilde{h}}(P_s f )   \right ), \  &\Gamma^{\xi} (s) := P_{t-s} \left ( \Gamma^{\xi}(P_s f )   \right ), 
\end{align*}
and also
\begin{align*}
 \Gamma_2^{v} (s) &:= P_{t-s} \left ( \Gamma_2^{v}(P_s f )   \right ), \  \Gamma_2^{v,\tilde{h}} (s) := P_{t-s} \left ( \Gamma_2^{v,\tilde{h}}(P_s f )   \right ), \\
 \Gamma_2^{\tilde{h}} (s) &:= P_{t-s} \left ( \Gamma_2^{\tilde{h}}(P_s f )   \right ), \  \Gamma_2^{\xi} (s) := P_{t-s} \left ( \Gamma_2^{\xi}(P_s f )   \right ).
\end{align*}

Introduce also (with a slight abuse of notation) 
\begin{align*}
\Sigma^v (s) &:= P_{t-s} \left ( \Sigma^v (P_s f)  \right ) \\
\Sigma^v_2 (s) & := P_{t-s} \left ( \Sigma_2^v (P_s f)  \right )
\end{align*}
For $a,b,c, \hat{a}, \hat{b}, \hat{c}$ non negative numbers we introduce for $s\in [0,t]$
\begin{align*}
\mathcal{F}_s := \Lambda(s) + as^2 \Gamma^{v} (s) -2 b s^4 \Gamma^{v,\tilde{h}} (s) + c s^6 \Gamma^{\tilde{h}} (s)  + \hat{a}s^4 \Sigma^v (s) - 2 \hat{b} s^6 \Sigma^{v,\xi}(s)  + \hat{c} s^8 \Gamma^{\xi}(s).
\end{align*}
Then 
\begin{align*}
\frac{d}{ds} \mathcal{F}_s &= - 2 \Gamma (s) + 2as  \Gamma^v (s) - 2as^2 \Gamma_{2}^{v}(s) - 8bs^3\Gamma^{v,\tilde{h}} (s) + 4 bs^4 \Gamma_{2}^{v,\tilde{h}} (s) + 6cs^5 \Gamma^{\tilde{h}}(s) - 2c s^6 \Gamma_2^{\tilde{h}} (s) \\ 
& + 4 \hat{a} s^3 \Sigma^v (s) -2 \hat{a} s^4 \Sigma_{2}^v (s) - 12 \hat{b} s^5 \Sigma^{v, \xi} (s) +4\hat{b}s^6 \Sigma_2^{v,\xi}(s) + 8 \hat{c} s^7 \Gamma^{\xi}(s) -2 \hat{c} s^8 \Gamma_2^\xi (s). 
\end{align*}

\begin{Lem}\label{regularization}
One can find non negative $a,b,c, \hat{a}, \hat{b}, \hat{c} $ such that $b^2 < ac$ and $\hat{b}^2 < \hat{a} \hat{c}$ such that for $0\leq s \leq t\leq t_0$ one has
\[
\frac{d}{ds} \mathcal{F}_s \leq 0.
\]
So 
\[
\mathcal{F}_s \leq \mathcal{F}_0 = P_t (f^2). 
\]
This implies in particular that one can find $\tilde{a}, \tilde{c}>0$ such that for $t\leq t_0$ 
\begin{align*}
\Vert \hat{\nabla}^v P_t f \Vert^2 \leq \frac{\tilde{a}}{t^2}P_t \left (f^2 \right), \ \mathrm{and}, \  \Vert \hat{\nabla}^{h} P_t f \Vert^2 \leq \frac{\tilde{c}}{t^8}P_t \left (f^2 \right). 
\end{align*}
\end{Lem}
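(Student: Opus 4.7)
The strategy is to mirror, at the time-dependent level, the pointwise Bakry--\'Emery estimate of Proposition \ref{prop.ineq}: the monomials $s^{2},s^{4},s^{6},s^{8}$ play the role of the constants $a,b,c,d$ there, and two auxiliary $\Sigma$-terms are added in order to pick up control of $\nabla^{\xi}$ and $\Delta^{v}f$. The input is Lemmas \ref{gamma2calc} and \ref{sigma}; the output will be the sign of a polynomial in $s$ whose coefficients are quadratic forms in the Hessians and the gradients of $P_{s}f$.

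\textbf{Step 1 (two negative semi-definite Hessian blocks).} Substituting the explicit formulas into $\frac{d\mathcal F_{s}}{ds}$, the combination $-2as^{2}\Gamma_{2}^{v}+4bs^{4}\Gamma_{2}^{v,\tilde h}-2cs^{6}\Gamma_{2}^{\tilde h}$ produces the pure-Hessian quadratic form
\[
-a\sigma^{2}s^{2}\|\mathrm{Hess}^{v}f\|^{2}+2b\sigma^{2}s^{4}\langle\mathrm{Hess}^{v}f,\mathrm{Hess}^{v,\tilde h}f\rangle-c\sigma^{2}s^{6}\|\mathrm{Hess}^{v,\tilde h}f\|^{2},
\]
whose $2\times 2$ matrix has negative trace and determinant $\sigma^{4}s^{12}(ac-b^{2})>0$: the condition $b^{2}<ac$ forces it to be negative semi-definite. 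In parallel, $-2\hat a s^{4}\Sigma_{2}^{v}+4\hat b s^{6}\Sigma_{2}^{v,\xi}-2\hat c s^{8}\Gamma_{2}^{\xi}$ yields a similar negative semi-definite block in the pair $(\nabla^{v}\Delta^{v}f,\mathrm{Hess}^{v,\xi}f)$ under $\hat b^{2}<\hat a\hat c$. These two blocks will act as absorbers for every remaining Hessian contribution.

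\textbf{Step 2 (Young and matching powers of $s$).} The residual terms split into (i) mixed Hessian--gradient terms such as $\langle\mathrm{Hess}^{v,\xi}f,\nabla^{v}f\rangle$, $\mathrm{Tr}(\mathrm{Hess}^{v,\tilde h}f)\xi(f)$ and their $\Sigma_{2}^{v,\xi}$ analogues, (ii) curvature terms $\langle R(\nabla^{\bullet}f,e^{0})e^{0},\nabla^{\bullet}f\rangle$, bounded in absolute value by $M\|\nabla^{\bullet}f\|^{2}$ thanks to Assumption A, and (iii) direct gradient terms $2as\|\nabla^{v}f\|^{2}$, $6cs^{5}\|\nabla^{\tilde h}f\|^{2}$, $8\hat cs^{7}\|\nabla^{\xi}f\|^{2}$, and similarly for $\Sigma^{v},\Sigma^{v,\xi}$. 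Apply Young's inequality to (i): small multiples of Hessian norms are absorbed by the strictly negative blocks of Step 1 (the gaps $ac-b^{2}>0$ and $\hat a\hat c-\hat b^{2}>0$ leave room), leaving only gradient-squared residues. All residues are then compared to the diagonal negative gradient contributions
\[
-\sigma^{2}\|\nabla^{v}f\|^{2},\ -a(n-2)\sigma^{2}s^{2}\|\nabla^{v}f\|^{2},\ -2b\kappa s^{4}\|\nabla^{\tilde h}f\|^{2},\ -c\sigma^{2}s^{6}\|\nabla^{\tilde h}f\|^{2},\ -\hat c(n-1)\sigma^{2}s^{8}\|\nabla^{\xi}f\|^{2}.
\]
Every positive residue carries a strictly lower power of $s$ than the absorber matching its gradient direction, so choosing $t_{0}>0$ smaller than an explicit ratio of coefficients produces $\frac{d\mathcal F_{s}}{ds}\le 0$ for $0\le s\le t\le t_{0}$. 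As in Proposition \ref{prop.ineq}, all six coefficients can be parametrized by two scalars $\ep,\ep'\in(0,1)$ so that the joint admissibility conditions $b^{2}<ac$ and $\hat b^{2}<\hat a\hat c$ and all Young constraints hold simultaneously.

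\textbf{Step 3 (from $\mathcal F_{t}\le P_{t}(f^{2})$ to gradient bounds, and main obstacle).} Integrating and evaluating at $s=t$ yields
\[
at^{2}\|\nabla^{v}P_{t}f\|^{2}-2bt^{4}\langle\nabla^{v}P_{t}f,\nabla^{\tilde h}P_{t}f\rangle+ct^{6}\|\nabla^{\tilde h}P_{t}f\|^{2}+\hat at^{4}(\Delta^{v}P_{t}f)^{2}-2\hat bt^{6}\Delta^{v}P_{t}f\cdot\xi(P_{t}f)+\hat ct^{8}\xi(P_{t}f)^{2}\le P_{t}(f^{2}).
\]
The $(\nabla^{v},\nabla^{\tilde h})$ block is positive-definite (discriminant $(ac-b^{2})t^{8}>0$) and the $(\Delta^{v},\xi)$ block is positive-definite (discriminant $(\hat a\hat c-\hat b^{2})t^{12}>0$); inverting each of them produces $\|\nabla^{v}P_{t}f\|^{2}\le\tilde a\,t^{-2}P_{t}(f^{2})$ and $\|\nabla^{h}P_{t}f\|^{2}=\|\nabla^{\tilde h}P_{t}f\|^{2}+\|\nabla^{\xi}P_{t}f\|^{2}\le\tilde c\,t^{-8}P_{t}(f^{2})$, the worst exponent coming from the $\nabla^{\xi}$ component. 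The hardest part is the bookkeeping of Step 2: six coefficients together with half a dozen Young parameters must be chosen simultaneously so that both Hessian blocks remain strictly negative semi-definite, every cross-term is absorbed, and the resulting $t_{0}$ is uniform. The presence of the $\Sigma$-terms in particular couples $(a,b,c)$ to $(\hat a,\hat b,\hat c)$ through $(\Delta^{v}f)^{2}\le(n-1)\|\mathrm{Hess}^{v}f\|^{2}$, and keeping the two Hessian blocks negative semi-definite \emph{after} this coupling is where most of the calculational work lies; this is presumably why the exponents $t^{-2}$ and $t^{-8}$ are not expected to be optimal.
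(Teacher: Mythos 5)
Your overall blueprint matches the paper's: time-dependent polynomial weights $s^{2},s^{4},s^{6},s^{8}$ in place of the static coefficients $a,b,c,d$, the two $\Sigma$-terms to reach $\xi$-direction control, two negative semi-definite Hessian blocks sustained by the discriminant conditions, Young inequalities to feed the cross-terms into those blocks, and positive-definiteness of the integrated quadratic forms to extract the final $t^{-2}$ and $t^{-8}$ bounds. That is precisely what the paper does.

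Two points in Step 2 are off and one of them is substantive. First, the power-counting statement is reversed: for the negative diagonal term to dominate the positive residue as $s\downarrow 0$, the positive residue must carry a \emph{strictly higher} power of $s$ than the absorber, not a lower one (compare $6cs^{5}\Gamma^{\tilde h}$ against $-2b\kappa s^{4}\|\nabla^{\tilde h}f\|^{2}$). Second, and more importantly, your list of absorbers for the $\nabla^{\xi}$ direction is wrong. You list $-\hat c(n-1)\sigma^{2}s^{8}\|\nabla^{\xi}f\|^{2}$ (coming from $-2\hat c s^{8}\Gamma_{2}^{\xi}$), but this sits at order $s^{8}$, one order \emph{above} the positive residue $8\hat c s^{7}\Gamma^{\xi}(s)$ that must be controlled; for $s$ small the positive term would dominate and the argument would collapse. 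The absorber that actually does the job is the term $-\tfrac{(n-1)\kappa}{2}\|\nabla^{\xi}f\|^{2}$ inside $\Sigma_{2}^{v,\xi}$, which after multiplication by $4\hat b s^{6}$ appears at order $s^{6}$, one order below the $s^{7}$ residue. This is in fact the whole point of introducing the $\Sigma$-calculus: the bracket $[\Delta^{v},\xi]$ inside $\Sigma_{2}^{v,\xi}$ is what furnishes a lower-order negative contribution in the $\xi$ direction, something none of the $\Gamma_{2}$'s can supply at the right order. Without identifying this term your Step 2 does not close. (Minor: the determinant of the first Hessian block is $\sigma^{4}s^{8}(ac-b^{2})$, not $\sigma^{4}s^{12}(ac-b^{2})$; this does not affect the sign.)

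With those two corrections your plan becomes the paper's proof. The remaining work — the simultaneous choice of all the Young parameters and of $(a,b,c,\hat a,\hat b,\hat c)$ so that both Hessian discriminant conditions survive the cross-couplings — is exactly the bookkeeping the paper carries out by sequentially pinning down $\ep_{3},\ep_{4}$ (to make $\hat C=-\hat c s^{8}\sigma^{2}/2$ exactly), then $\ep_{5},\ep_{13},\ep_{1},\ep_{2}$ (for $C^{\tilde h}$), then $\ep_{9},\ep_{6},\ep_{11},\ep_{7},\ep_{12}$ (for $C^{\xi}$), then $\ep_{10},\ep_{8}$ (for $C$), and finally $\hat b,\hat a,\hat c,a,b$; you rightly flag this as the heart of the matter but leave it out.
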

\begin{proof}
One takes inequalities (involving $\ep_1, \dots, \ep_5$) on the $\Gamma_2$ used in the proof of Proposition  \ref{prop.ineq}. Moreover, by lemma \ref{sigma} one has for $\ep_7, \dots, \ep_{11} >0$  
\begin{align*}
\Sigma_2^v(f) &\geq  \frac{\sigma^2}{2} \Vert \nabla^v \Delta^v f  \Vert^2 - \frac{\kappa(n-1)}{2} \left ( \ep_7 \Vert  \nabla^\xi f \Vert^2 + \frac{n-1}{\ep_7} \Vert  \mathrm{Hess}^v f\Vert^2  \right) - \kappa (n-1) \left ( \ep_8 \Vert \mathrm{Hess}^{v,\tilde{h}}  \Vert^2 + \frac{1}{\ep_8} \Vert \mathrm{Hess}^{v}  \Vert^2  \right) \\
\Sigma_2^{v,\xi} (f) & \leq - \frac{(n-1)\kappa}{2} \Vert \nabla^{\xi} f \Vert^2 + \frac{\sigma^2}{2} \langle \nabla^v \Delta^v f , \mathrm{Hess}^{v,\xi} f \rangle + \frac{(n-1)\sigma^2}{8} \left ( \frac{1}{\ep_9}(\Delta^v f)^2  + \ep_9 \Vert \nabla^\xi f \Vert^2\right )  \\
& \quad \quad  +  \frac{\sigma^2}{4} \left ( \frac{1}{\ep_{10}}(\Delta^v f)^2  + (n-1) \ep_{10} \Vert \mathrm{Hess}^{v,\tilde{h}} \Vert^2 \right ) + \frac{\kappa}{2} \left ( \ep_{11} \Vert \nabla^{\xi} f \Vert^2 + \frac{n-1}{\ep_{11}} \Vert \mathrm{Hess}^{v,\tilde{h}} \Vert^2\right).
\end{align*}
We have also for $\ep_{12}, \ep_{13}>0$ by Young inequality
\begin{align*}
 \Sigma^{v,\xi} f   &\leq  \frac{1}{2} \left ( \frac{n-1}{\ep_{12}} \Vert \mathrm{Hess}^v f \Vert^2  + \ep_{12} \Vert \nabla^{\xi} f  \Vert^2 \right ) \\
 \Gamma^{v, \tilde{h}} f  &\leq \frac{1}{2} \left ( \frac{1}{\ep_{13}} \Vert \nabla^v f \Vert^2  + \ep_{13} \Vert \nabla^{\tilde{h}} f  \Vert^2 \right ).
\end{align*}
Then 
\begin{align*}
\frac{d}{ds} \mathcal{F}_s &\leq A \Vert \mathrm{Hess}^{v} P_s f \Vert^2 - 2B \langle \mathrm{Hess}^{v} , \mathrm{Hess}^{v,\tilde{h}} P_s f  \rangle + C \Vert \mathrm{Hess}^{v, \tilde{h}} P_s f \Vert^2  \\
& \quad \hat{A} \Vert \nabla^v \Delta^v P_s f \Vert^2 -2\hat{B} \langle \nabla^v \Delta^v P_s f , \mathrm{Hess}^{v,\xi} P_s f \rangle + \hat{C} \Vert \mathrm{Hess}^{v,\xi} P_s  f \Vert^2 \\ 
& \quad + C^v \Vert \nabla^v P_s \Vert^2 + C^{\tilde{h}} \Vert  \nabla^{\tilde{h}} P_s f \Vert^2 + C^{\xi} \Vert \nabla^{\xi} P_s f \Vert^2. 
\end{align*}
where 
\begin{align*}
A&= -2 a s^2 \frac{\sigma^2}{2} + 4 \hat{b} s^6 \frac{(n-1)^2 \sigma^2}{8} \times \left ( \frac{1}{\ep_9} + \frac{2}{n-1} \frac{1}{\ep_{10}}\right )  \\ 
& \quad + 4 \hat{a} s^3 (n-1) + 2 \hat{a} s^4 \left ( \frac{\kappa (n-1)^2}{2} \frac{1}{\ep_7} + \frac{\kappa (n-1)}{\ep_8}  \right ) + 6 \hat{b} s^5 \frac{n-1}{\ep_{12}} \\
 B&=-2b s^4 \frac{\sigma^2}{2} \\
 C&= -2 c s^6 \frac{\sigma^2}{2} + 2 \hat{a} s^4 \kappa (n-1) \ep_8 + 4 \hat{b} s^6 (n-1) \ep_{10} \frac{\sigma^2}{4} 
 + 4 \hat{b} s^6 \frac{\kappa}{2} \frac{n-1}{\ep_{11}} + 2 \hat{c} s^8 \frac{\sigma^2}{2} \times \frac{n-1}{\ep_6} \\ 
 \hat{A}&= -2 \hat{a} s^4 \frac{\sigma^2}{2} \\ 
 \hat{B}&= -2 \hat{b} s^6 \frac{\sigma^2}{2} \\ 
 \hat{C} &= 4b s^4 \frac{\sigma^2}{2} \times \frac{\ep_3}{2} + 2 c s^6 \frac{\sigma^2}{2} \times \frac{1}{\ep_4} -2 \hat{c} s^8 \frac{\sigma^2}{2} \\
 C^v &= - \sigma^2 + 2as -2as^2 \left ( \frac{\sigma^2}{2} (n-2) - \frac{\kappa}{2} \times \frac{1}{\ep_1} \right ) + \frac{4bs^3}{\ep_{13}} \\
  &\quad + 4bs^4 \left ( \frac{n-1}{4} \times \frac{\sigma^2}{2} \times \frac{1}{\ep_2} + \frac{1}{2\ep_3}+ \frac{\kappa}{2} M \right )  +2 cs^6 \times \frac{\kappa M}{2} \times \frac{1}{\ep_5} \\
 C^{\tilde{h}}&= 2as^2 \frac{\kappa}{2} \ep_1 + 4bs^4 \left ( -\frac{\kappa}{2} + \frac{n-1}{4} \ep_2 \right ) + 4 b s^3 \ep_{13} + 6cs^6 \left ( \frac{\sigma^2}{2} - \frac{\sigma^2}{2} \ep_4 - \frac{\kappa M}{2} \ep_5  \right ) \\ 
 C^{\xi} &= 2\hat{a}s^4 \frac{\kappa (n-1)}{2} \ep_7 + 6 \hat{b} s^5 \ep_{12} + 8 \hat{c} s^7 -2 \hat{c} s^8 \frac{\sigma^2}{2}\left ( n-1 - \ep_6  \right )  - 4 \hat{b} s^6 \left ( \frac{(n-1)\kappa}{2} - \frac{(n-1)}{8} \sigma^2 \ep_9 - \frac{\kappa}{2}\ep_{11}  \right ) 
\end{align*}
We want to choose $\ep_1, \dots, \ep_{13}$, $a,b,c$ and $\hat{a}, \hat{b}, \hat{c}$ such that 
\begin{align*}
&b^2 < ac, \ \hat{b} < \hat{a} \hat{c}, \   B^2 \leq AC, \  \hat{B}^2 \leq \hat{A} \hat{C},  \ C^v \leq 0, \ C^{\tilde{h}} \leq 0, \ C^{\xi} \leq 0 \\ 
&A \leq 0, \quad B \leq 0, \quad C \leq 0, \quad \hat{A} \leq 0, \quad \hat{B} \leq 0, \quad \hat{C} \leq 0.
\end{align*}
First fix $\ep_3:= \frac{\hat{c}}{4b} s^4$ and $\ep_4 = \frac{4c}{\hat{c}} \frac{1}{s^2}$ and so 
\[
\hat{C}= -\hat{c} s^8 \frac{\sigma^2}{2}.
\]
Then fix $\ep_5 =1$, $\ep_{13}=\frac{\kappa}{8}s$, $\ep_1 = \frac{b}{2a} s^2$, $\ep_2= \frac{\kappa}{2(n-1)}$ and so 
\[
C^{\tilde{h}} = \left ( -b \frac{\kappa}{2} + 12 \frac{c^2}{\hat{c}}\sigma^2  \right )s^4 + o(s^4).
\] 
Fix also $\ep_9 =s$, $\ep_6=1$, $\ep_{11}= \frac{n-1}{4}$, $\ep_7 = \frac{\hat{b}}{2 \hat{a}}s^2$, $\ep_{12}=\frac{(n-1)\kappa}{12}s$ and so 
\[
C^{\xi} = - \hat{b} \frac{(n-1)\kappa}{2} s^6 + o(s^6).
\]
Finally fix $\ep_{10}=s$ and $\ep_{8}= \frac{c\sigma^2}{4\hat{a} \kappa (n-1)}s^2$ so that 
\[
C= \left (-c \frac{\sigma^2}{2} +16 \hat{b} \frac{\kappa}{2}  \right ) s^6 + o(s^6).
\]
We have also 
\begin{align*}
A&= \left ( -2a \frac{\sigma^2}{2} + \frac{(2 \hat{a})^2}{\hat{b}} \frac{\kappa(n-1)^2}{2} + 8 \frac{\hat{a}}{c} \frac{\kappa^2 (n-1)^2}{\sigma^2} \right )s^2 + o(s^2)\\
C^v &= -\sigma^2 + \frac{2a^2 \kappa}{b} + \frac{16b^2}{2\hat{c}} + o(1).
\end{align*}
Then choose $\hat{b}$ such that $16\hat{b} \frac{\kappa}{2} = \frac{1}{2} \times c \frac{\sigma^2}{2}$ so finally
\[
C=  -\frac{c}{2} \frac{\sigma^2}{2}  s^6 + o(s^6).
\]
Then choose $\hat{a}$ such that $\frac{(2 \hat{a})^2}{\hat{b}} \frac{\kappa(n-1)^2}{2} +8 \frac{\hat{a}}{c} \frac{\kappa^2 (n-1)^2}{\sigma^2} =  a \frac{\sigma^2}{2}$ so that 
\[
A= -a \frac{\sigma^2}{2} s^2 + o(s^2). 
\]
Then the condition $B^2 \leq A C$ is satisfied, when $s$ is sufficiently small, by choosing $a,b,c$ such that
$4b^2 < ac$. 
We finish by choosing $\hat{c}$ sufficiently large so that $12 \frac{c^2}{\hat{c}}\sigma^2 < -b \frac{\kappa}{2}$  (so $C^{\tilde{h}} >0$) and so that $2\hat{b}^2 < \hat{a}\hat{c}$ (so $\hat{B}^2 \leq \hat{A} \hat{C}$. For  the condition $C^v >0$ be satisfied we take $a$ and $b$ such that $ \frac{2a^2 \kappa}{b} = \frac{\sigma^2}{3}$ and take also $\hat{c}$  sufficiently large so that $ \frac{16b^2}{2\hat{c}} \leq \frac{\sigma^2}{3}$. 

\end{proof}

\subsubsection{Convergence in $L^2$}
This is a consequence of the convergence in $H^1$ and the previous regularization lemma.
\begin{Prop}
Under the hypotheses of Proposition \ref{PropH1}, there is $C>1$ such that for all function $f \in L^2 (\mu)$ such that $\int f d\mu =0$ one has for all $t>0$
\[
\Vert P_t f \Vert^2_{H^1(\mu,\mathcal{T})} \leq C e^{-2\tilde{\lambda}t} \Vert f \Vert^2_{L^2(\mu)}.
\]
\end{Prop}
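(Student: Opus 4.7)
The plan is to combine the regularization lemma (Lemma \ref{regularization}) with the exponential $H^1$ decay of Proposition \ref{PropH1} via a standard ``regularize then decay'' argument. Fix the time $t_0>0$ provided by Lemma \ref{regularization}. First I will establish that $P_{t_0}$ maps $L^2(\mu)$ boundedly into $H^1(\mu,\mathcal{T})$; second, I will apply Proposition \ref{PropH1} to the function $P_{t_0}f$ on the interval $[t_0,t]$.

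For the regularization step, the invariance of $\mu$ together with Jensen's inequality gives the $L^2$-contraction $\|P_{t_0}f\|_{L^2}^2\le\|f\|_{L^2}^2$. Integrating the pointwise bounds of Lemma \ref{regularization} against $\mu$ and invoking invariance once more,
\[
\int\|\nabla^v P_{t_0}f\|^2\,d\mu\le\frac{\tilde a}{t_0^2}\int P_{t_0}(f^2)\,d\mu=\frac{\tilde a}{t_0^2}\|f\|_{L^2}^2,\qquad \int\|\nabla^h P_{t_0}f\|^2\,d\mu\le\frac{\tilde c}{t_0^8}\|f\|_{L^2}^2.
\]
Since $\mathcal{T}(g)\le C'\bigl(\|\nabla^v g\|^2+\|\nabla^h g\|^2\bigr)$ pointwise for some constant $C'=C'(a,b,c,d)$, these two estimates combine to give $\|P_{t_0}f\|_{H^1(\mu,\mathcal{T})}^2\le C_0\|f\|_{L^2}^2$ for some constant $C_0=C_0(t_0)$.

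For the decay step, invariance of $\mu$ gives $\int P_{t_0}f\,d\mu=\int f\,d\mu=0$, so Proposition \ref{PropH1} applies to the function $P_{t_0}f$. Writing $P_tf=P_{t-t_0}(P_{t_0}f)$ for $t\ge t_0$,
\[
\|P_tf\|_{H^1(\mu,\mathcal{T})}^2\le e^{-2\tilde\lambda(t-t_0)}\|P_{t_0}f\|_{H^1(\mu,\mathcal{T})}^2\le C_0e^{2\tilde\lambda t_0}e^{-2\tilde\lambda t}\|f\|_{L^2}^2,
\]
which is the claimed bound with $C:=C_0e^{2\tilde\lambda t_0}$ for $t\ge t_0$; the short-time range $0<t<t_0$ is absorbed into the constant $C$ (possibly enlarged), as is standard in hypocoercivity arguments of this type where only the large-$t$ regime is used later to deduce $L^2$ convergence by $L^2$-contractivity.

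The key analytic ingredient is the regularization step, which has been taken care of by Lemma \ref{regularization}; everything else is a soft semigroup manipulation. The only technical point remaining is a density argument, regularizing $f\in L^2(\mu)$ by smooth compactly supported functions (on which the pointwise bounds of Lemma \ref{regularization} are unambiguously valid), then passing to the limit using $L^2$-contractivity of $P_t$ and lower semicontinuity of the $H^1$ norm.
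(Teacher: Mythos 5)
Your proof is correct and takes essentially the same route as the paper's: fix $t_0$ from Lemma \ref{regularization}, use the pointwise regularization bounds together with invariance of $\mu$ to show $\|P_{t_0}f\|_{H^1(\mu,\mathcal{T})}^2\le C_0\|f\|_{L^2(\mu)}^2$, then feed $P_{t_0}f$ into the $H^1$ decay of Proposition \ref{PropH1} and absorb $e^{2\tilde\lambda t_0}$ into the constant. You are somewhat more explicit than the paper about the $L^2$-contraction step (the paper simply notes $t\mapsto\int(P_tf)^2\,d\mu$ is nonincreasing, you use Jensen plus invariance — both fine), about the bound $\mathcal{T}(g)\le C'(\|\nabla^v g\|^2+\|\nabla^h g\|^2)$, about the density argument needed to pass from compactly supported functions to general $f\in L^2$, and about the short-time regime $0<t<t_0$ which the paper leaves implicit; these are useful clarifications but do not change the structure of the argument.
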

\begin{proof}
As in the proof of Proposition \ref{PropH1} one obtains
\[
\Vert  P_t f \Vert_{H^1}^2 \leq e^{-2\tilde{\lambda}(t-t_0)} \Vert P_{t_0} f \Vert_{H^1}^2,
\]
and by Lemma \ref{regularization}, one can find a constant $\tilde{C}>0$ such that
\[
 \int K \mathcal{T}(P_{t_0}(f)) d\mu \leq \frac{\tilde{C}}{t_{0}^8} \int  f^2 d\mu. 
\]
Moreover, since $t \mapsto \int (P_t f )^2 d \mu$ is decreasing, we have also $\int (P_{t_0} f )^2 d \mu \leq \int  f^2 d\mu$ 
\end{proof}


\begin{thebibliography}{10}
 \bibitem{ABT}
 I. Bailleul, J.Angst, C. Tardif, \emph{Kinetic Brownian motion on Riemannian manifolds}, EJP 2015
 
 \bibitem{BCG} 
D. Bakry, P. Cattiaux \& A. Guillin, \emph{Rate of convergence for ergodic continuous Markov processes: Lyapunov versus Poincar\'e}.  J. Funct. Anal. 254 (2008), no. 3, 727-759. 

  \bibitem{BE} 
D. Bakry \& M. Emery, \emph{Diffusions hypercontractives}, S\'emin. de probabilit\'es XIX, Univ. Strasbourg, Springer, 1983.

 \bibitem{baudoin-bakry}
 F. Baudoin, \emph{Bakry-Emery meet Villani}, 2013, Arxiv preprint, http://arxiv.org/abs/1308.4938
 
 \bibitem{BaudoinEMS} 
 F. Baudoin, \emph{Sub-Laplacians and hypoelliptic operators on totally geodesic Riemannian foliations}, Course of the Institute Henri Poincar\'e,  2014
  
 \bibitem{baudoinAMS}
 F. Baudoin, \emph{Wasserstein contraction properties for hypoelliptic diffusions}, Arxiv preprint, http://arxiv.org/abs/1602.04177, 2016
 
 \bibitem{BG} 
F. Baudoin \& N. Garofalo, \emph{Curvature-dimension inequalities and Ricci lower bounds for sub-Riemannian manifolds with transverse symmetries}, To appear in the Journal of the EMS, Arxiv preprint, http://arxiv.org/abs/1101.3590 

 \bibitem{BeBo} L. B\'erard-Bergery, J.P. Bourguignon, \emph{Laplacians and Riemannian submersions with totally geodesic fibres.} Illinois J. Math. 26 (1982), no. 2, 181-200
 
\bibitem{Bismut}  J-M. Bismut, \emph{The hypoelliptic Laplacian on the cotangent bundle}, J. Amer. Math. Soc. 18 (2005), no. 2

\bibitem{Dolbeault} J. Dolbeault, C. Mouhot, C. Schmeiser, \emph{Hypocoercivity for linear kinetic equations conserving mass}, Trans. Amer. Math. Soc. 2015.


\bibitem{EH} J-P. Eckmann \& M. Hairer, \emph{Spectral Properties of Hypoelliptic Operators}, Communications in Mathematical Physics, April 2003, Volume 235, Issue 2, pp 233-253

              
\bibitem{Elworthy} K.D. Elworthy, \emph{Decompositions of diffusion operators and related couplings}, preprint 2014

\bibitem{FLJ07} J. Franchi, Y. Le Jan, \emph{Relativistic diffusions and Schwarzschild geometry}. Comm. Pure Appl. Math. 60 (2007)

\bibitem{GM13} S. Gadat, L. Miclo, \emph{Spectral decompositions and $L^2$-operator norms of toy hypocoercive semi-groups}, Kinetic and Related Models, 2013.

\bibitem{GS2014} M. Grothaus, P. Stilgenbauer, \emph{Hypocoercivity for Kolmogorov backward evolution equations and applications}, J. Funct. Anal. 267 (2014)

\bibitem{GKMSW2014} M. Grothaus, A. Klar, J. Maringer, P. Stilgenbauer and R. Wegener, \emph{Application of a three-dimensional fiber lay-down model to non-woven production processes} J. Math. Ind 4, 2014

\bibitem{GS2013} M. Grothaus, P. Stilgenbauer, \emph{Geometric Langevin equations on submanifolds and applications to the stochastic melt-spinning process of nonwovens and biology.} Stoch. Dyn.13,  2013


 

 \bibitem{HN1}B. Helffer, F. Nier: \emph{Hypoelliptic estimates and spectral theory for Fokker-Planck operators and Witten Laplacians.} Lecture Notes in Mathematics, 1862. Springer-Verlag, Berlin, (2005).
 
\bibitem{Herau} F. H\'erau. Short and long time behavior of the Fokker-Planck equation in a confining potential and applications. J. Funct. Anal. 244, 1, 2007


 \bibitem{HN2}F. H\'erau \& F. Nier: \emph{Isotropic hypoellipticity and trend to equilibrium for the Fokker-Planck equation with high degree potential}, Arch. Ration. Mech. Anal., 171(2):151-218, (2004).

\bibitem{Hladky} R. Hladky, \emph{Connections and Curvature in sub-Riemannian geometry}. Houston J. Math, 38 (2012), no. 4, 1107-1134

\bibitem{XMLi} X-M Li,  \emph{Random perturbation to the geodesic equation}, Annal of Prob. 2016.


 \bibitem{Matt} JC Mattingly, AM Stuart, \& DJ Higham, \emph{Ergodicity for SDEs and approximations: locally Lipschitz vector fields and degenerate noise}, Stochastic Processes and their Applications, vol. 101 no. 2 (October, 2002), pp. 185-232
 

\bibitem{mymarket} P. Monmarch\'e. \emph{Generalized $\Gamma$ calculus and application to interacting particles on a graph.} Arxiv preprint, http://arxiv.org/pdf/1510.05936v2


\bibitem{Talay}  D. Talay, \emph{Stochastic Hamiltonian systems: exponential convergence to the invariant measure, and discretization by the implicit Euler scheme} Markov Process. Related Fields 8, 2, 2002. 


\bibitem{Tondeur} P.  Tondeur, \emph{Foliations on Riemannian manifolds}.  Universitext. Springer-Verlag, New York, 1988. xii+247 pp

\bibitem{Villani1} C. Villani: \emph{Hypocoercivity},  Mem. Amer. Math. Soc. 202 (2009), no. 950.

\bibitem{FYWang2}F-Y Wang: \emph{Generalized Curvature Condition for Subelliptic Diffusion Processes}, http://arxiv.org/pdf/1202.0778v2

\bibitem{FYWang3}F-Y Wang: \emph{Analysis for diffusion processes on Riemannian manifolds}, Advanced Series on Statistical Science and Applied Probability, Vol. 18. World Scientific, (2014).


\bibitem{Wu}
L. Wu, \emph{Large and moderate deviations and exponential convergence for stochastic damping Hamiltonian systems}. Stochastic Process. Appl. 91 (2001), no. 2, 205Ð238


\end{thebibliography}
\end{document}